\numberwithin{equation}{section}
\theoremstyle{plain}
\newtheorem{theorem}{Теорема}
\newtheorem{propos}{Предложение}
\newtheorem{corollary}{Следствие}
\newtheorem{lemma}{Лемма}
\newtheorem{thF}{Теорема Фростмана}
\theoremstyle{definition}
\newtheorem{proof}{Доказательство}
\newtheorem{remark}{Замечание}
\newtheorem{definition}{Определение}
\newtheorem{example}{Пример}
\renewcommand{\leq}{\leqslant} 
\renewcommand{\geq}{\geqslant}
\newcommand{\RR}{\mathbb{R}} 
\newcommand{\CC}{\mathbb{C}} 
\newcommand{\NN}{\mathbb{N}} 
\newcommand{\DD}{\mathbb{D}}
\newcommand{\BB}{\mathbb{B}}  
\renewcommand{\Re}{{\mathrm{Re}\,}}
\renewcommand{\Im}{{\mathrm{Im}\,}}
\DeclareMathOperator{\supp}{{\sf supp}}
\DeclareMathOperator{\dd}{\,{\mathrm d\!}}
\DeclareMathOperator{\rad}{{\text{\tiny \rm rd}}}
\begin{document} 
\title{Интегралы  от разностей субгармонических  функций.~III. Мера и обхват Хаусдорфа, интегрирование по липшицевым кривым и поверхностям}
	
\author[B.\,N.~Khabibullin]{Б.\,Н.~Хабибуллин}
\address{Башкирский государственный университет}
\email{khabib-bulat@mail.ru}

\date{15.07.2021}
\udk{517.547.2 : 517.574 : 517.518.1}

 \maketitle

\begin{fulltext}

\begin{abstract}  
Получены дополнительные интегральные неравенства для интегралов от разностей субгармонических функций по мерам Бореля на шарах в многомерном евклидовом пространстве. Эти интегралы по-прежнему оцениваются сверху через характеристику Неванлинны и различные характеристики  меры Бореля и её носителя.  Все результаты новые и для логарифмов модулей мероморфных функций на кругах в комплексной плоскости.   Допускается интегрирование по мерам Бореля с носителем на фрактальных множествах, а оценки в этих случаях даются через 
меру и обхваты Хаусдорфа носителя меры Бореля. Отдельно отмечены важные в применениях частные случаи функций во  всей комплексной плоскости и пространстве, в единичном круге или  шаре, а также 
 интегрирования по длине на подмножествах липшицевых кривых и по площади на подмножествах липшицевых гиперповерхностей.

Библиография:  14 названия  

Ключевые слова: мероморфная функция, $\delta$-субгармоническая  функция, характеристика Неванлинны, заряд Рисса,  $h$-мера Хаусдорфа, $h$-обхват Хаусдорфа, $p$-мерная мера Хаусдорфа, $p$-мерный обхват Хаусдорфа,  липшицева кривая, лиршицева гиперповерхность, спрямляемость 




\end{abstract}

\markright{Интегралы  от разностей субгармонических  функций \dots}

	
Используются прежние определения и обозначения, иногда с напоминаниями, из первых двух частей \cite{Kha21N1}--\cite{Kha21N2} нашей работы.

\section{Снова модуль непрерывности  меры\\ и интегральные неравенства}

Ещё раз напомним определение \cite[определение 2]{Kha21N1} и элементарные свойства модуля непрерывности меры из первой части \cite{Kha21N1} нашей работы.
\begin{definition}\label{defmc}
 {\it Модуль непрерывности\/} меры  Бореля 
$\mu$ на $\RR^{\tt d}$ ---  функция 
\begin{equation}\label{hmuR}
{\sf h}_{\mu}\colon t\underset{t\in \RR^+}{\longmapsto} \sup\limits_{y\in \RR^{\tt d}}\mu \bigl(\overline B_y(t)\bigr)
=\sup\limits_{y\in \RR^{\tt d}}\mu_y^{\rad}(t)\in \overline \RR^+.  
\end{equation}
\end{definition}
\begin{propos}[{\rm \cite[предложение 2]{Kha21N1}}]\label{proh}
Пусть   $\mu$ ---  мера Бореля на $\RR^{\tt d}$ и  
\begin{equation}\label{Mmu}
{\tt M}:=\mu (\RR^{\tt d})\in \overline \RR^+.
\end{equation}  
 Тогда ${\sf h}_{\mu}$  --- возрастающая функция, удовлетворяющая неравенству 
\begin{subequations}\label{hR}
\begin{align}
{\sf h}_{\mu}(t)&\leq {\tt M}\quad\text{при всех $t\in  \RR^+$}, 
\tag{\ref{hR}$\leq$}\label{hRMl}
\\ 
\intertext{а если носитель $\supp \mu$ меры $\mu$ содержится в шаре $\overline B(r)$, то}
{\sf h}_{\mu}(t)&\equiv {\tt M}\quad\text{при всех $t\geq r$}.
\tag{\ref{hR}$\equiv$}\label{h<M}
\end{align}
\end{subequations}
\end{propos}

Следующее небольшое уточнение \cite[основная теорема]{Kha21N1} --- это лёгкое следствие импликации I$\Longrightarrow$II из \cite[теорема-критерий]{Kha21N1}. Уточнение касается прежде всего того, что за счёт некоторого увеличения абсолютных постоянных в неравенстве  \cite[основная теорема, (3.5)]{Kha21N1} удаётся последний интеграл
в \cite[основная теорема, (3.5T)]{Kha21N1} по интервалу $(0,R+r]$ заменить на меньший интеграл по интервалу $(0,r]$, не зависящему от выбора $R>r$.

\begin{theorem}\label{th1} Пусть $0<r\in \RR^+$, а  $\mu$ ---  мера Бореля на замкнутом шаре   $\overline B(r)\subset \RR^{\tt d}$   радиуса $r$ с центром в нуле  
полной меры \eqref{Mmu} с модулем непрерывности ${\sf h}_{\mu}$ из \eqref{hmuR} и 
выполнено условие
\begin{equation}\label{emcont}
\int_0 \frac{{\sf h}_{\mu}(t)}{t^{\tt d-1}}\dd t< +\infty.
\end{equation} 
Тогда любая $\delta$-субгармоническая  функция   $U\not\equiv\pm\infty$ на замкнутом шаре $\overline B(R)$
радиуса  $R>r$  $\mu$-суммируема  и
\begin{equation}\label{URm}
\int_{\overline B(r)} U^+\dd \mu \leq  
 A_{\tt d}(r,R){\boldsymbol  T}_U( r, R)
 \biggl({\tt M}\max\{1, r^{2-\tt d}\}
+\widehat{\tt d}\int_0^{r}\frac{{\sf h}_{\mu}(t)}{t^{{\tt d}-1}}\dd t\biggr),
\end{equation}
с конечной правой частью, где \cite[(2.2A)]{Kha21N2}
\begin{equation}
 A_{\tt d}(r,R):=5\max\bigl\{1, {\tt d}-2\bigr\}\Bigl(\frac{R+r}{R-r}\Bigr)^{\tt d-1}\max\Bigl\{1, (R-r)^{\tt d-2}\Bigr\},
\label{{UR}A}
\end{equation}
а первый аргумент $r$ в ${\boldsymbol  T}_U( r, R)$ можно заменить на любое  число  $r'\in [0,r]$.
\end{theorem}
\begin{proof} По определению \cite[(1.7)]{Kha21N2} радиальной  проинтегрированной считающей функции с центром $y\in \RR^{\tt d}$ для меры $\mu$ очевидны  неравенства
\begin{equation*}
{\sf N}_y^{\mu}(x):=\widehat{\tt d}\int_0^x \frac{\mu_y^{\rad}(t)}{t^{\tt d-1}}\dd t
\overset{\eqref{hmuR}}{\leq} \widehat{\tt d}\int_0^x \frac{{\sf h}_{\mu}(t)}{t^{\tt d-1}}\dd t, 
\quad \widehat {\tt d}:=\max\{1,{\tt d}-1\},
\end{equation*} 
для любого $x\in \RR^+$. Поскольку правая часть здесь не зависит от $y\in \overline B(r)$ и по условию \eqref{emcont} при некотором $x>0$ конечна, то выполнено условие (2.1) утверждения I из \cite[теорема-критерий]{Kha21N2}. При этом по  \cite[лемма 2]{Kha21N2} мера $\mu$ конечна, откуда 
\begin{equation*}
\sup_{y\in B(r)}{\sf N}_y^{\mu}(r)\leq \widehat{\tt d}\int_0^r \frac{{\sf h}_{\mu}(t)}{t^{\tt d-1}}\dd t<+\infty.
\end{equation*} 
По  последнему  неравенству из импликации  {I}$\Longrightarrow${II}  из \cite[теорема-критерий]{Kha21N2} получаем  $\mu$-суммируемость рассматриваемой функции $U$ на $\overline B(R)$ и неравенство \eqref{URm} с учётом \eqref{Mmu} и с конечной правой частью.
\end{proof}

Следующая теорема даёт неравенства с более явными выражениями.

\begin{theorem}\label{th2} Пусть $0<r\in \RR^+$ и $h\colon [0,r]\to \RR^+$ --- непрерывная функция с $h(0)=0$, дифференцируемая на $(0,r)$,  для которой  
\begin{equation}\label{sh}
\frac{1}{{\sf s}_h}:=\inf_{t\in (0,r)}\frac{th'(t)}{h(t)}-({\tt d-2})>0.
\end{equation}
Тогда для любой меры Бореля $\mu$ на $\overline B(r)$ полной меры\/ ${\tt M}:=\mu \bigl(\overline B(r) \bigr)$ и  с модулем непрерывности ${\sf h}_{\mu}$ из \eqref{hmuR}, удовлетворяющим неравенству  
\begin{equation}\label{ch}
{\sf h}_{\mu}(t)\leq h(t)\quad\text{при всех $t\in [0,r]$}, 
\end{equation}
 любая  $\delta$-субгармоническая   функция   $U\not\equiv \pm \infty$ на шаре $\overline B(R)$ радиуса $R>r$
 $\mu$-суммируема, а  для единственного значения  $h^{-1}(M)\leq r$ имеем  неравенство
\begin{subequations}\label{URh}
\begin{align}
\int U^+\dd \mu \leq&  
5\frac{R+r}{R-r} {\boldsymbol  T}_U(r,R) \, {\tt M}\ln\frac{e^{1+{\sf s}_h}r}{h^{-1}({\tt M})}
\quad\text{при ${\tt d}=2$, т.е. для\/ $\CC$,}
\tag{\ref{URh}$\CC$}\label{{URh}T}
\\
\int U^+\dd \mu \leq&  
A_{\tt d}(r,R) {\boldsymbol  T}_U(r,R)\, {\tt M}\biggl(1+\frac{1+({\tt d}-2){\sf s}_h}{\bigl(h^{-1}({\tt M})\bigr)^{\tt d-2}}
\biggr)
\quad\text{при  ${\tt d}>2$,}
\tag{\ref{URh}{\tt d}}\label{{URh}2}
\end{align}
\end{subequations}
где $r$ в ${\boldsymbol  T}_U(r,R)$ из правых  частей  \eqref{{URh}T}и \eqref{{URh}2} 
можно заменить на любое число   $r'\in [0,r]$, а   $A_{\tt d}(r,R)$ в \eqref{{URh}2} --- величина из \eqref{{UR}A}.
\end{theorem}
\begin{proof} Если точная нижняя грань в \eqref{sh} равна $+\infty$, то $h=0$ на $(0,r)$,  
по условию  \eqref{ch} мера $\mu$ нулевая и неравенства \eqref{URh} по принятому 
соглашению $0\cdot \pm\infty:=: \pm\infty\cdot 0:=0$ тривиальны. 
Поэтому далее ${\sf s}_h>0$. Отметим некоторые свойства функции $h$.

Прежде всего из условия \eqref{ch} следует, что производная $h'$ строго положительна на $(0,r)$, откуда   
функция $h$ строго возрастающая на открытом интервале $(0,r)$, а в силу непрерывности {\it строго возрастающая на отрезке\/ $[0,r]$.} В частности, $h(t)>0$ при $t\in (0,r]$.  
Исходя из   определения числа ${\sf s}_h>0$ в  \eqref{ch},
непосредственными вычислениями убеждаемся, что
\begin{equation}\label{htd}
\frac{\dd}{\dd t}\Bigl(\frac{h(t)}{t^{\tt d-2}}\Bigr)=
\Bigl(\frac{th'(t)}{h(t)}+{\tt 2-d}\Bigr)\frac{h(t)}{t^{\tt d-1}}\overset{\eqref{ch}}{\geq}
\frac{1}{{\sf s}_h}\frac{h(t)}{t^{\tt d-1}}>0
\quad\text{при всех $t\in (0,r)$}.
\end{equation}
и   функция $t\mapsto h(t)/t^{\tt d-2}$  строго возрастающая   на $(0,r]$. Продолжение по непрерывности этой  функции в точку\/ $0$   по непрерывности как $\lim\limits_{0<t\to 0}h(t)/t^{\tt d-2}\geq 0$ 
с сохранением строгого возрастания очевидно. При этом
\begin{multline}\label{inth}
\int_0^{x}\frac{h(t)}{t^{\tt d-1}}\dd t
\leq {\sf s}_h\int_0^x \frac{\dd}{\dd t}\Bigl(\frac{h(t)}{t^{\tt d-2}}\Bigr)\dd t\\
={\sf s}_h \frac{h(x)}{x^{\tt d-2}}-{\sf s}_h \lim_{0<t\to 0}\frac{h(t)}{t^{\tt d-2}}
\leq {\sf s}_h \frac{h(x)}{x^{\tt d-2}} <+\infty
\quad \text{при всех $x\in [0,r]$}.
\end{multline}
Отсюда, в частности,  при выполнении  неравенства \eqref{ch} модуль непрерывности ${\sf h}_{\mu}$ 
удовлетворяет условию \eqref{emcont}, а кроме того, функция $h$ достигает значения ${\tt M}$
не правее, чем модуль непрерывности ${\sf h}_{\mu}$. Последнее по тождеству \eqref{h<M} предложения
\ref{proh} означает, что определено $h^{-1}({\tt M})\leq r$.  По теореме \ref{th1} 
для любой $\delta$-субгармонической функции $U\not\equiv \pm\infty$ выполнено   неравенство \eqref{URm}, где в правой части первые два сомножителя те же, что и в \eqref{URh}, а дополнительных преобразований в виде верхних оценок требует только последний сомножитель в скобках, заданный как сумма  
\begin{multline}\label{Mpr}
{\tt M}
\max\{1, r^{2-{\tt d}}\}+\widehat{\tt d}\int_0^{r}\frac{{\sf h}_{\mu}(t)}{t^{{\tt d}-1}}\dd t\\
=
{\tt M}\max\{1, r^{2-{\tt d}}\}+\widehat{\tt d}\int_0^{h^{-1}({\tt M})}\frac{{\sf h}_{\mu}(t)}{t^{{\tt d}-1}}\dd t
+\widehat{\tt d}\int_{h^{-1}({\tt M})}^{r}\frac{{\sf h}_{\mu}(t)}{t^{{\tt d}-1}}\dd t
\\
\overset{ \eqref{ch},\eqref{hRMl}}{\leq} {\tt M}\max\{1, r^{2-{\tt d}}\}+\widehat{\tt d}\int_0^{h^{-1}({\tt M})}\frac{h(t)}{t^{{\tt d}-1}}\dd t
+\widehat{\tt d}\int_{h^{-1}({\tt M})}^{r}\frac{{\tt M}}{t^{{\tt d}-1}}\dd t
\\
\overset{\eqref{inth}}{\leq}
{\tt M}\max\{1, r^{2-{\tt d}}\}+\widehat{\tt d}\,{\sf s}_h\frac{h\bigl(h^{-1}({\tt M})\bigr)}{\bigl(h^{-1}({\tt M})\bigr)^{{\tt d}-2}}
+\widehat{\tt d}\int_{h^{-1}({\tt M})}^{r}\frac{{\tt M}}{t^{{\tt d}-1}}\dd t 
\\
={\tt M}\biggl(\max\{1, r^{2-{\tt d}}\}+\frac{{\sf s}_h\widehat{\tt d}}{\bigl(h^{-1}({\tt M})\bigr)^{{\tt d}-2}}
+\Bigl(\Bbbk_{\tt d-2}(r)-\Bbbk_{\tt d-2}\bigl(h^{-1}({\tt M})\bigr)\Bigr)\biggr).
\end{multline}
При ${\tt d}=2$ правая часть здесь равна 
\begin{equation*}
{\tt M}\Bigl(1+{\sf s}_h+\ln\frac{r}{h^{-1}({\tt M})}\Bigr)={\tt M}\ln\frac{e^{1+{\sf s}_h}r}{h^{-1}({\tt M})}
\end{equation*}
и совпадает с фрагментом  правой части из \eqref{{URh}T}, содержащим ${\tt M}$. 

При ${\tt d}>2$ правая часть \eqref{Mpr} равна 
\begin{multline*}
{\tt M}\biggl(\max\{1, r^{2-{\tt d}}\}+\frac{{\sf s}_h({\tt d}-2)}{\bigl(h^{-1}({\tt M})\bigr)^{{\tt d}-2}}
+\biggl(\frac{1}{\bigl(h^{-1}({\tt M})\bigr)^{\tt d-2}} -\frac{1}{r^{\tt d-2}}\biggr)\Biggr)
\\
={\tt M}\biggl((1- r^{2-{\tt d}})^++\frac{{1+\sf s}_h({\tt d}-2)}{\bigl(h^{-1}({\tt M})\bigr)^{{\tt d}-2}}
\biggr)
\leq  {\tt M}\biggl(1+\frac{1+({\tt d}-2){\sf s}_h}{\bigl(h^{-1}({\tt M})\bigr)^{\tt d-2}}\biggr),
\end{multline*}
где правая часть совпадает с фрагментом  правой части  \eqref{{URh}2}, содержащим  ${\tt M}$. 
\end{proof}

\begin{remark}\label{rem1} 
Условие \eqref{sh}  теоремы \ref{th2} можно записать и в виде 
\begin{equation*}
\frac{1}{{\sf s}_h}:=\inf_{-\infty<x<\ln r} \bigl(\ln h(e^x)\bigr)'_x-({\tt d-2})>0,
\end{equation*}
а  условия  непрерывности функции $h$ и  её дифференцируемости  на $(0,r)$
 можно заменить на одно условие выпуклости функции 
 $h$ относительно $\ln$. Тогда $h$ непрерывна,  существует правая производная функции $h$   на $(0, r)$, а  производную $h'$ в  \eqref{sh} допустимо заменить на правую производную от $h$.
\end{remark}

\section{Обхват и мера Хаусдорфа в интегральных неравенствах}\label{mch}
\begin{definition}[{(\cite{Carleson}, \cite{Federer}, \cite{Rodgers}, \cite{HedbergAdams}, \cite{EG}, \cite{Eid07}, \cite{VolEid13})}]\label{defH}
Для функции   $h\colon \RR^+\to \RR^+$  и величины  $t\in \overline \RR^+\setminus 0$ 
функцию множеств 
\begin{equation}\label{mr}
{\mathfrak m}_h^{\text{\tiny $t$}}\colon S\underset{S\subset \RR^{\tt d}}{\longmapsto}  \inf \Biggl\{\sum_{j\in N} h(r_j)\biggm| N\subset \NN,\,  S\subset \bigcup_{j\in N} 
\overline B_{x_j}(r_j), \, x_j\in \RR^{\tt d}, \, r_j\underset{j\in N}{<} t\Biggr\} 
\end{equation}
со значениями в  $\overline \RR^+$ называем {\it $h$-обхватом  Хаусдорфа радиуса\/} 
$t$. Для каждого $S\subset \RR^{\tt d}$ значения    ${\mathfrak m}_h^{\text{\tiny $t$}}(S)$ убывают по $t$ и существует предел   
\begin{equation}\label{hH}
{\mathfrak m}_h^{\text{\tiny $0$}}(S):=\lim_{0<t\to 0} {\mathfrak m}_h^{\text{\tiny $t$}}(S)
\geq {\mathfrak m}_h^{\text{\tiny $t$}}(S)\geq {\mathfrak m}_h^{\text{\tiny $\infty$}}(S)
 \quad \text{\it для любого  $S\subset \RR^{\tt d}$}, 
\end{equation}
При $h(0)=0$ все обхваты ${\mathfrak m}_h^{\text{\tiny $t$}}$ --- внешние меры,  а ${\mathfrak m}_h^{\text{\tiny $0$}}$ определяет  {\it $h$-меру Хаусдорфа\/} ${\mathfrak m}_h^{\text{\tiny $0$}}$, являющуюся регулярной мерой Бореля.
Для  степенной   функции $h_p$ степени $p\in \RR^+$   с нормирующим множителем вида
\begin{equation}\label{hd}
h_p\colon x\underset{t\in \RR^+}{\longmapsto} 
c_px^p, \quad\text{где } 
c_p:=\dfrac{\pi^{p/2}}{\Gamma(p/2+1)}, \quad \Gamma
\text{ --- гамма-функция},
\end{equation}
 $h_p$-обхват радиуса  $t$ и  $h_p$-меру Хаусдорфа 
называем соответственно  {\it $p$-мер\-н\-ы\-ми  обхватом радиуса  $t$\/} и\/ {\it мерой Хаусдорфа\/}, которые обозначаем как
\begin{equation}\label{p-m}
p\text{\tiny-}{\mathfrak m}^{\text{\tiny $t$}}:={\mathfrak m}_{h_p}^{\text{\tiny $t$}}, \quad  
p\text{\tiny-}{\mathfrak m}^{\text{\tiny $0$}}:={\mathfrak m}_{h_p}^{\text{\tiny $0$}}. 
\end{equation}
\end{definition}

Здесь и далее классические и широко известные свойства обхватов и мер Хаусдорфа из основных источников, указанных в  начале определения  \ref{defH}, часто  используются без явно прописанных  конкретных  ссылок. 

\begin{example}\label{ex:1} Линейная мера Лебега $\uplambda_{\RR}$ на $\RR$ и плоская мера Лебега $\uplambda_{\CC}$ на $\CC$, использованные в \S\S~1--2,   совпадают соотвественно  с  одномерной   мерой Хаусдорфа $1\text{\tiny-}{\mathfrak m}^{\text{\tiny $0$}}$ на $\RR$ и с двумерной мерой Хаусдорфа $2\text{\tiny-}{\mathfrak m}^{\text{\tiny $0$}}$ на $\CC$,  а число элементов множества $S$   --- это $0$-мерная  мера Хаусдорфа этого множества $0\text{\tiny-}{\mathfrak m}^{\text{\tiny $0$}}(S)$, но для большей корректности полезно учесть \cite{Tuzhilin}--\cite{Tuzhilinr}. 
Кроме того, $\tt d$-мерная {\it пространственная мера Лебега\/} $\uplambda_{\RR^{\tt d}}$ на $\RR^{\tt d}$  совпадает с  
${\tt d}$-мерной  мерой Хаусдорфа ${\tt d}\text{\tiny-}{\mathfrak m}^{\text{\tiny $0$}}(S)$. 
Если  $p>{\tt d}$, то   $p$-мерная  мера Хаусдорфа  $p\text{\tiny-}{\mathfrak m}^{\text{\tiny $0$}}$ в $\RR^{\tt d}$ нулевая.
\end{example}

Неоднократно будет использована следующая фундаментальная 

\begin{thF}[{\rm (\cite[теорема II.1]{Carleson}, \cite[теорема 5.1.12]{HedbergAdams})}] 

\begin{enumerate}[{\rm I.}]
\item\label{IF} Если   $h\colon \RR^+\to \RR^+$ --- некоторая функция,
 а $\mu$ --- мера Бореля на $\RR^{\tt d}$
с модулем непрерывности 
\begin{equation}\label{hX}
{\sf h}_{\mu}(t)\overset{\eqref{hmuR}}{:=}\sup_{x\in \RR^{\tt d}}\mu \bigl(\overline B_x(t)\bigr)\leq h(t) \quad\text{при  всех  $t\in \RR^+$},
\end{equation} 
то
\begin{equation}\label{zv}
\mu(S)\leq {\mathfrak m}_h^{\text{\tiny $\infty$}}(S) \quad\text{для любого $\mu$-измеримого  $S\subset \RR^{\tt d}$}.
\end{equation}

\item\label{IIF} 
Существует такое  число $A>0$,
что для каждой возрастающей функции $h\colon \RR^+\to \RR^+$  с $h(0)=0$ и  для любого 
компакта $E\subset \RR^{\tt d}$ найдётся мера Радона $\mu$ на $E$, для которой 
выполнено \eqref{hX} и, как следствие, \eqref{zv},  а также одновременно неравенство  
противоположной направленности 
\begin{equation}\label{muhr}
A\mu(E) \geq {\mathfrak m}_h^{\text{\tiny $\infty$}}(E).
\end{equation} 
\end{enumerate}
\end{thF}
\begin{remark}
Обе части \ref{IF} и \ref{IIF} теоремы Фростмана в известных нам  формулировках даются с едиными посылками и, как следствие, с перегрузкой условий на функцию $h$ в части \ref{IF}, которая  
для $N$, $x_j$ и  $r_j$  из требования в фигурных скобках из  \eqref{mr}
получается применением к крайним частям неравенств
\begin{equation*}
\mu(S)\leq \sum_{j\in N}\mu\bigl(B_{x_j}(r_j)\bigr)\overset{\eqref{hX}}{\leq} \sum_{j\in N}h(r_j) 
\end{equation*}
операции $\inf$ по всем  таким $N$, $x_j$ и  $r_j$.

 Содержательность значительно более глубокой части \ref{IIF} теоремы Фростмана для настоящей статьи 
уже  в том, что для теорем \ref{th1} и \ref{th2} она обеспечивает существование ненулевых мер $\mu$, удовлетворяющих условиям этих теорем. 
\end{remark}

\begin{theorem}\label{th3} Пусть мера Бореля $\mu$ полной меры\/ $\tt M$ с модулем непрерывности ${\sf h}_{\mu}$ сосредоточена на $\mu$-измеримом множестве $S\subset \overline B(r)$.   Тогда 
\begin{equation}\label{smu}
{\tt M}= {\mathfrak m}_{{\sf h}_{\mu}}^{\text{\tiny $\infty$}}(S)
={\mathfrak m}_{{\sf h}_{\mu}}^{\text{\tiny $t$}}(S)
\quad\text{при любом радиусе обхвата $t\geq r$},
\end{equation} 
а для любой функции $h\colon [0,r]\to \RR^+$ при $h\geq {\sf h}_{\mu}$ на $[0,r]$ и  продолжении $h$ на луч $(r,+\infty)$ 
значением $h(r)$ имеют место неравенства
\begin{equation}\label{smuh}
{\tt M}\leq {\mathfrak m}_{h}^{\text{\tiny $\infty$}}(S)
\leq {\mathfrak m}_{h}^{\text{\tiny $t$}}(S)
\quad\text{при любом радиусе обхвата $t\in \overline \RR^+$}.
\end{equation} 
В частности, 
\begin{enumerate}[{\rm (i)}]
\item\label{ih} в итоговом   неравенстве \eqref{URm} теоремы\/ {\rm \ref{th1}}
 можно  заменить в  правой части полную меру\/ ${\tt M}$ на ${\sf h}_{\mu}$-обхват\/  ${\mathfrak m}_{{\sf h}_{\mu}}^{\text{\tiny $t$}}(S)$  любого радиуса обхвата $t\in \overline \RR^+$;
\item\label{iih} в  заключительных неравенствах \eqref{{URh}T} и \eqref{{URh}2} теоремы\/ {\rm \ref{th2}}  можно заменить парные вхождения в правых частях  полной меры\/ ${\tt M}$  одновременно на $h$-обхват  ${\mathfrak m}_h^{\text{\tiny $t$}}(S)$  множества $S$ любого радиуса обхвата $t\in \overline \RR^+$.
\end{enumerate}
\end{theorem}
\begin{proof}
По части \ref{IF} теоремы Фростмана при $h:={\sf h}_{\mu}$ имеем
\begin{equation}\label{F1}
{\tt M}=\mu(\RR^{\tt d})=\mu(S)\leq 
{\mathfrak m}_{{\sf h}_{\mu}}^{\text{\tiny $\infty$}}(S)
\overset{\eqref{hH}}{\leq} {\mathfrak m}_{{\sf h}_{\mu}}^{\text{\tiny $t$}}(S),
\end{equation}
откуда сразу получаем \eqref{smuh}. При радиусе обхвата $t\geq r$ шар $\overline B(r)$ включает в себя 
 $S$ и в то же время по определению \eqref{mr} имеем ${\mathfrak m}_{{\sf h}_{\mu}}^{\text{\tiny $t$}}(S)\leq {\sf h}_{\mu}(r)$, где по неравенству  \eqref{hRMl} предложения \ref{proh} правая часть не превышает $\tt M$. Это вместе с \eqref{F1} даёт  равенства  \eqref{smu}. По неравенствам  \eqref{smuh} утверждение \eqref{ih} очевидно. 

Для доказательства  утверждения  \eqref{iih} потребуется 

\begin{lemma}\label{lem2} Пусть  функция $h$ такая же, как в теореме\/ {\rm \ref{th2}},  с числом  ${\sf s}_h>0$ 
из \eqref{sh}, а значит строго возрастающая на $[0,r]$, и  пусть $r\leq B\in \RR^+$.  Тогда
\begin{subequations}\label{h-1}
\begin{align}
x&\underset{x\in [0, h(r)]}{\longmapsto}
\frac{x}{\bigl(h^{-1}(x)\bigr)^{\tt d-2}} 
 \quad \text{при ${\tt d}>2$},
\tag{\ref{h-1}{\tt d}}\label{{h-1}d}
\\
x&\underset{x\in [0, h(r)]}{\longmapsto} x\ln \frac{Be^{{\sf s}_h}}{h^{-1}(x)} \quad \text{при ${\tt d=2}$}
\tag{\ref{h-1}{$\CC$}}\label{{h-1}C}
\end{align}
\end{subequations}
--- возрастающие  функции на отрезке  $\bigl[0, h(r)\bigr]$.
\end{lemma}
\begin{proof}[леммы \ref{lem2}] Произведём замену $y:=h^{-1}(x)\in [0,r]$ и перейдём от функций 
\eqref{h-1} к функциям  
\begin{subequations}\label{h-1-}
\begin{align}
y&\overset{\eqref{{h-1}d}}{\underset{y\in [0,r]}{\longmapsto}}
\frac{h(y)}{y^{\tt d-2}}  \quad \text{при ${\tt d}>2$},
\tag{\ref{h-1-}{\tt d}}\label{{h-1-}d}
\\
y&\overset{\eqref{{h-1}C}}{\underset{y\in [0,r]}{\longmapsto}} h(y)\ln \frac{Be^{{\sf s}_h}}{y} \quad \text{при ${\tt d=2}$},
\tag{\ref{h-1-}{$\CC$}}\label{{h-1-}C}
\end{align}
\end{subequations}
Ввиду строгого возрастания непрерывной функции $h$  на $[0,r]$ достаточно показать, что возрастают функции  \eqref{h-1-}. Строгое возрастание функции \eqref{{h-1-}d} было обосновано  в \eqref{htd} и ниже при доказательстве теоремы \ref{th2}.  Для функции \eqref{{h-1-}C} её дифференцирование  на  $(0,r)$ даёт 
\begin{equation*}
\frac{\dd}{\dd y}h(y)\ln \frac{Be^{{\sf s}_h}}{y}=h'(y)\ln \frac{Be^{{\sf s}_h}}{y}-\frac{h(y)}{y}\overset{\eqref{sh}}{\geq} 
h'(y)\ln \frac{Be^{{\sf s}_h}}{y}-{\sf s}_hh'(y)=h'(y)\ln \frac{B}{y}\geq 0
\end{equation*} 
на $(0,r)$ при $B\geq r$, откуда следует возрастание функции \eqref{{h-1-}C} на   $(0,r)$, а в силу непрерывности и на отрезке $[0,r]$. 
\end{proof}

Теперь по лемме \ref{lem2}   в силу возрастания функции \eqref{{h-1}d}
в  заключительном  неравенстве \eqref{{URh}2} теоремы\/ {\rm \ref{th2}}
 можно заменить\/ ${\tt M}$  на $h$-обхват  ${\mathfrak m}_h^{\text{\tiny $t$}}(S)
\overset{\eqref{smuh}}{\geq} {\tt M}$  множества $S$ любого радиуса обхвата $t\in \overline \RR^+$.

Для   заключительного неравенства \eqref{{URh}T}  теоремы\/ {\rm \ref{th2}} снова по 
лемме \ref{lem2}   в силу возрастания функции \eqref{{h-1}C} с $B:=er\geq r$ 
можно заменить\/ ${\tt M}$  на $h$-обхват  ${\mathfrak m}_h^{\text{\tiny $t$}}(S)\overset{\eqref{smuh}}{\geq} {\tt M}$  множества $S$ любого радиуса обхвата $t\in \overline \RR^+$.
\end{proof}

Допускаемая теоремой \ref{th3}\eqref{ih} замена  в неравенстве  \eqref{URm} теоремы {\rm \ref{th1}}  
полной меры ${\tt M}$ на  $h$-обхват  Хаусдорфа ${\mathfrak m}_{h}^{\text{\tiny $\infty$}}(S)$ при радиусе обхвата  $t\geq r$
согласно равенствам \eqref{smu} не ослабляет это неравенство. Но и для заключительных неравенств \eqref{{URh}2} и  \eqref{{URh}T} теоремы\/ {\rm \ref{th2}} при любой функции $h$ возможны ситуации, когда 
 замена полной меры ${\tt M}$ на  $h$-обхват  Хаусдорфа ${\mathfrak m}_{h}^{\text{\tiny $\infty$}}(S)$ радиуса $+\infty$
ослабляет  эти оценки  разве что  на абсолютную постоянную-множитель, что отражает 

\begin{theorem}\label{th4}
Существует такая абсолютная постоянная $A\geq 1$, что для любого  $r\in \RR^+\setminus 0$, для всякого  
 компакта $S\subset \overline B(r)$ и  для каждой функции $h\colon [0,r]\to \RR^+$, удовлетворяющей всем условиям теоремы\/ {\rm \ref{th2}} с постоянной ${\sf s}_h\overset{\eqref{sh}}{>}0$,
  найдётся  такая мера Бореля  $\mu$ на $\overline B(r)$ полной меры ${\tt M}>0$,  с носителем 
$\supp \mu \subset S$ и с модулем непрерывности, 
удовлетворяющим  \eqref{ch}, что одновременно  с неравенствами \eqref{URh} как с\/ $\tt M$, так и с\/ ${\mathfrak m}_{h}^{\text{\tiny $\infty$}}(S)$ вместо $\tt M$
 для произвольной $\delta$-субгармонической функции $U\not\equiv\pm\infty$ на шаре $\overline B(R)$ радиуса $R>r$  выполнены  и неравенства   с множителем $A$ перед\/ ${\tt M}$  вида
\begin{subequations}\label{inFp-}
\begin{align}
A\, {\tt M}\ln \frac{e^{1+{\sf s}_h}r}{h^{-1}({\tt M})}&\geq   {{\mathfrak m}_{h}^{\text{\tiny $\infty$}}(S)}\ln \frac{e^{1+{\sf s}_h}r}{h^{-1}({\mathfrak m}_{h}^{\text{\tiny $\infty$}}(S))}
 \quad \text{при ${\tt d=2}$}, 
\tag{\ref{inFp-}{$\CC$}}\label{{h-1}CA}
\\
A\, {\tt M}\biggl(1+\frac{1+({\tt d}-2){\sf s}_h}{\bigl(h^{-1}({\tt M})\bigr)^{\tt d-2}}
\biggr)
&\geq   {{\mathfrak m}_{h}^{\text{\tiny $\infty$}}(S)}
\biggl(1+\frac{1+({\tt d}-2){\sf s}_h}{h^{-1}({\mathfrak m}_{h}^{\text{\tiny $\infty$}}(S))}\biggr)
 \quad \text{при ${\tt d}>2$}.
\tag{\ref{inFp-}{\tt d}}\label{{h-1}dA}
\end{align}
\end{subequations}
\end{theorem}
\begin{proof}
Пусть  $E:=S\subset \overline{B}(r)$ и  $h$ --- функция из условия теоремы \ref{th2} с постоянным  продолжением значением $h(r)$ на луч $(r,+\infty)$.  По части \ref{IIF} теоремы Фростмана выберем меру Радона $\mu\neq 0$ с всеми прописанными в этой части  \ref{IIF} свойствами. Тогда выполнены условия теорем \ref{th2} и \ref{th3} 
с их заключениями соответственно \eqref{URh} и \eqref{iih} и в то же время  из  \eqref{smuh} и \eqref{muhr}  следует 
$$ 
{\tt M}=\mu(S)\overset{\eqref{smuh}}{\leq} {\mathfrak m}_h^{\text{\tiny $\infty$}}(S) \overset{\eqref{muhr}}{\leq} A\mu(S)=A\,{\tt M}.
$$
Отсюда, учитывая возрастание функции  $h^{-1}$ в знаменателях левых частей 
\eqref{{h-1}dA}и \eqref{{h-1}CA},  получаем  оба неравенства из \eqref{inFp-}.
\end{proof}

\section{Частные случаи  неравенств для интегралов от разностей субгармонических функций}\label{S7}

\subsection{Случай $p$-мерных обхватов и мер Хаусдорфа}\label{p7_1}

\begin{theorem}\label{th5} Пусть  $0<r\leq t\in \overline \RR^+$, $p\in ({\tt d}-2,{\tt d}]$, $b\in \RR^+$. Для любой  меры Бореля  $\mu$ на  $\overline B(r)$ с носителем  $\supp \mu\subset  S \subset \overline B(r)$ и модулем непрерывности
\begin{equation}\label{{chrh}hd}
{\sf h}_{\mu}(x)\overset{\eqref{sh}}{\leq} bx^p\quad \text{при  всех $x\in [0,r]$}
\end{equation}
каждая    $\delta$-суб\-г\-а\-р\-м\-о\-н\-и\-ч\-е\-с\-к\-ая   функция  $U\not\equiv \pm \infty$ на шаре  $\overline B(R)$
 радиуса $R>r$  $\mu$-суммируема и 
\begin{subequations}\label{Uhd}
\begin{align}
\int_{\overline D(r)}
 U^+\dd \mu &\leq \frac{b}{p}\frac{R+r}{R-r} {\boldsymbol  T}_U(r,R)\, p\text{\tiny-}{\mathfrak m}^{\text{\tiny $t$}}(S)
\ln\frac{\pi e^{p+1}r^p}{p\text{\tiny-}{\mathfrak m}^{\text{\tiny $t$}}(S)}\quad\text{при ${\tt d=2}$ для\/ $\CC$},
\tag{\ref{Uhd}$\CC$}\label{{Uhd}C}\\
\int_{\overline B(r)} U^+\dd \mu &\leq b
{\tt d}^{{\tt d}}A_{\tt d}(r,R) {\boldsymbol  T}_U(r,R)\notag\\
&\times p\text{\tiny-}{\mathfrak m}^{\text{\tiny $t$}}(S)
\Biggl(1+\frac{1}{\bigl(p-({\tt d-2})\bigr)\bigl(p\text{\tiny-}{\mathfrak m}^{\text{\tiny $t$}}(S)\bigr)^{\frac{\tt d-2}{p}}}\Biggr)\quad\text{при  ${\tt d}>2$,}
\tag{\ref{Uhd}{\tt d}}\label{{Uhd}d}
\end{align}
\end{subequations}  
 где  $r$ в ${\boldsymbol   T}_U(r,R)$  можно заменить на любое число $r'\in [0,r]$. 
\end{theorem}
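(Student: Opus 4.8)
The plan is to read Theorem~\ref{th5} as the specialization of Theorems~\ref{th2} and~\ref{th3} to the power gauge, so that after one substitution the whole matter reduces to the bookkeeping of constants. I would set $h(x):=bx^{p}$ on $[0,r]$, extended by the constant value $h(r)=br^{p}$ on $(r,+\infty)$ exactly as prescribed in Theorem~\ref{th3}; then hypothesis~\eqref{{chrh}hd} is verbatim the requirement ${\sf h}_{\mu}(x)\leq h(x)$ of~\eqref{ch}, so no adjustment of the measure is needed.

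First I would verify that this $h$ is admissible for Theorem~\ref{th2}: it is continuous with $h(0)=0$, strictly increasing and differentiable on $(0,r)$, and since $xh'(x)/h(x)\equiv p$, definition~\eqref{sh} gives $1/{\sf s}_h=p-({\tt d}-2)$, hence ${\sf s}_h=1/(p-{\tt d}+2)>0$ precisely because $p>{\tt d}-2$. Then I would record the two computations that carry the proof. One is the explicit inverse $h^{-1}(y)=(y/b)^{1/p}$, which yields the identities $\ln\bigl(r/h^{-1}(y)\bigr)=\tfrac1p\ln\bigl(br^{p}/y\bigr)$ and $\bigl(h^{-1}(y)\bigr)^{{\tt d}-2}=(y/b)^{({\tt d}-2)/p}$. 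The other is the linearity of content in its gauge: since $h=(b/c_{p})h_{p}$ with $c_{p}$ from~\eqref{hd}, one has ${\mathfrak m}_{h}^{\text{\tiny $t$}}(S)=(b/c_{p})\,p\text{\tiny-}{\mathfrak m}^{\text{\tiny $t$}}(S)$; write $m:=p\text{\tiny-}{\mathfrak m}^{\text{\tiny $t$}}(S)$ for brevity.

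With these in hand the derivation is mechanical. I apply Theorem~\ref{th2}: its hypotheses hold, and $\supp\mu\subset S\subset\overline B(r)$ forces $h^{-1}({\tt M})\leq r$ just as in the proof of Theorem~\ref{th2}, so the inequalities~\eqref{URh} are available (and the reduction to a smaller $r'\in[0,r]$ is inherited from there). I then invoke the substitution principle~\eqref{iih} of Theorem~\ref{th3}, whose validity rests on the monotonicity of Lemma~\ref{lem2}, to replace the mass ${\tt M}$ throughout the right-hand sides of~\eqref{URh} by ${\mathfrak m}_{h}^{\text{\tiny $t$}}(S)=(b/c_{p})m$. For ${\tt d}=2$ one has ${\sf s}_h=1/p$, and the logarithm identity above collapses the factor ${\mathfrak m}_{h}^{\text{\tiny $t$}}(S)\,\ln\bigl(e^{1+{\sf s}_h}r/h^{-1}({\mathfrak m}_{h}^{\text{\tiny $t$}}(S))\bigr)$ into $\tfrac1p\,(b/c_{p})\,m\,\ln\bigl(c_{p}e^{p+1}r^{p}/m\bigr)$, which is the shape of~\eqref{{Uhd}C}. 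For ${\tt d}>2$ the identity $1+({\tt d}-2){\sf s}_h=p/(p-{\tt d}+2)$ turns the right-hand bracket of the ${\tt d}>2$ inequality into $(b/c_{p})\,m\,\bigl(1+p\,c_{p}^{({\tt d}-2)/p}/\bigl[(p-{\tt d}+2)\,m^{({\tt d}-2)/p}\bigr]\bigr)$, which is the shape of~\eqref{{Uhd}d}.

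The only genuinely nontrivial step, and the one I expect to be the main obstacle, is absorbing the gauge constant $c_{p}=\pi^{p/2}/\Gamma(p/2+1)$ into the clean universal constants of~\eqref{Uhd}. For ${\tt d}>2$ the termwise comparison of the two brackets needs $1/c_{p}\leq{\tt d}^{{\tt d}}$ and $p\,c_{p}^{({\tt d}-2)/p-1}\leq{\tt d}^{{\tt d}}$ uniformly for $p\in({\tt d}-2,{\tt d}]$; both follow from Stirling-type bounds on $\Gamma$, the generous size of ${\tt d}^{{\tt d}}$ leaving ample room, and this is where the careful estimation lives. For ${\tt d}=2$ one uses the monotonicity $c_{p}\leq c_{2}=\pi$ on $(0,2]$, checked via the digamma function, to replace $c_{p}$ by $\pi$ under the logarithm; note, however, that the bare coefficient $b/p$ in~\eqref{{Uhd}C} is sharper than the factor $5$ carried by $A_{2}$, so at this point I would draw on the sharper planar form of the underlying estimate rather than the crude bound $A_{\tt d}$, after which the numerical simplification is routine.
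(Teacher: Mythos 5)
Your proposal follows essentially the same route as the paper's proof: substitute $h(x):=bx^{p}$ into Theorems~\ref{th2} and~\ref{th3}\,\eqref{iih}, compute ${\sf s}_h$ and $h^{-1}(y)=(y/b)^{1/p}$, pass from ${\mathfrak m}_{h}^{\text{\tiny $t$}}$ to $p\text{\tiny-}{\mathfrak m}^{\text{\tiny $t$}}$ via $h=(b/c_{p})h_{p}$, and absorb the powers of $c_{p}$ into the numerical constants using $1\leq c_p\leq \pi$ for $p\in(0,2]$ and $c_p^{({\tt d}-2)/p-1}\leq {\tt d}/2$, $1/c_p\leq ({\tt d}/2)^{{\tt d}/2}$ for ${\tt d}>2$ --- exactly the paper's argument. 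Your hesitation over the planar case is well placed but cannot be resolved the way you suggest: there is no sharper planar estimate than the factor $5\frac{R+r}{R-r}$ already present in \eqref{{URh}T}, and the paper's own chain terminates at the coefficient $5b/(pc_{p})\geq 5b/(p\pi)>b/p$, so the coefficient $b/p$ in \eqref{{Uhd}C} does not follow from $c_{p}\leq\pi$ and appears to be a slip that should read $5b/p$.
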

\begin{proof}
Положим  $h(x):=bx^p$ при всех  $x\in [0,r]$, откуда  
\begin{equation}\label{h-1x}
\frac1{{\sf s}_h}\overset{\eqref{sh}}{=}p-({\tt d-2})>0, \quad h^{-1}(y)=\Bigl(\frac{y}{b}\Bigr)^{1/p}, \quad h\overset{\eqref{hd}}{=}\frac{b}{c_p}h_p,
\end{equation}
По условию \eqref{{chrh}hd} выполнено и условие \eqref{ch} теоремы \ref{th2}, из применения которой вместе с теоремой \ref{th3}\eqref{iih} следует 
\begin{equation*}
\begin{split}
\int U^+\dd \mu \leq&  
5\frac{R+r}{R-r} {\boldsymbol  T}_U(r,R) \, {\mathfrak m}_h^{\text{\tiny $t$}}(S)\ln\frac{e^{1+\frac{1}{p}}r}{h^{-1}(
{\mathfrak m}_h^{\text{\tiny $t$}}(S))}
\quad\text{при ${\tt d}=2$,}
\\
\int U^+\dd \mu \leq&  
A_{\tt d}(r,R) {\boldsymbol  T}_U(r,R)\, {\mathfrak m}_h^{\text{\tiny $t$}}(S)
\biggl(1+\frac{1+\frac{p}{p-({\tt d-2})}}{\bigl(h^{-1}({{\mathfrak m}_h^{\text{\tiny $t$}}(S)})\bigr)^{\tt d-2}}\biggr)
\quad\text{при  ${\tt d}>2$,}
\end{split}
\end{equation*}
что согласно  равенству   
\begin{equation*}
h^{-1}({{\mathfrak m}_h^{\text{\tiny $t$}}(S)})\overset{\eqref{h-1x}}{=}
\Bigl(\frac{{\mathfrak m}_h^{\text{\tiny $t$}}(S)}{b}\Bigr)^{1/p}
\end{equation*}
можно переписать как 
\begin{subequations}\label{URhp+}
\begin{align}
\int U^+\dd \mu \leq&  5\frac{R+r}{R-r} {\boldsymbol  T}_U(r,R) \, {\mathfrak m}_h^{\text{\tiny $t$}}(S)\frac{1}{p}
\ln\frac{be^{p+1}r^p}{{\mathfrak m}_h^{\text{\tiny $t$}}(S)}
\quad\text{при ${\tt d}=2$, т.е. в\/ $\CC$,}
\tag{\ref{URhp+}$\CC$}\label{{URh}Tp+}
\\
\int U^+\dd \mu \leq&  A_{\tt d}(r,R) {\boldsymbol  T}_U(r,R)\, {\mathfrak m}_h^{\text{\tiny $t$}}(S)
\biggl(1+\frac{b^{\frac{\tt d-2}{p}}\bigl(2p-({\tt d-2})\bigr)}{\bigl(p-({\tt d-2})\bigr)\bigl({{\mathfrak m}_h^{\text{\tiny $t$}}(S)}\bigr)^{\frac{\tt d-2}{p}}}\biggr)
\tag{\ref{URhp+}{\tt d}}\label{{URh}2p+}
\end{align}
\end{subequations}
при  ${\tt d}>2$. При этом по определению \ref{defH} $h$-обхвата Хаусдорфа в \eqref{mr} и $p$-мерного обхвата 
Хаусдорфа в \eqref{p-m} по последнему равенству в \eqref{h-1x} имеем равенства
$$
{\mathfrak m}_h^{\text{\tiny $t$}}\overset{\eqref{h-1x}}{=}\frac{b}{c_p}
{\mathfrak m}_{h_p}^{\text{\tiny $t$}}\overset{\eqref{p-m}}{=}
\frac{b}{c_p}p\text{\tiny-}{\mathfrak m}^{\text{\tiny $t$}},
$$
и подстановка правой части в \eqref{{URh}Tp+} и в \eqref{{URh}2p+} даёт соответственно 
 \begin{subequations}\label{URhp+1}
\begin{align}
\int U^+\dd \mu \leq&  
\frac{5b}{pc_p}\frac{R+r}{R-r} {\boldsymbol  T}_U(r,R)\, p\text{\tiny-}{\mathfrak m}^{\text{\tiny $t$}}(S)
\ln\frac{c_pe^{1+p}r^p}{p\text{\tiny-}{\mathfrak m}^{\text{\tiny $t$}}(S)}
\quad\text{при ${\tt d}=2$,}
\tag{\ref{URhp+1}$\CC$}\label{{URh}Tp+1}
\\
\int U^+\dd \mu \leq& bA_{\tt d}(r,R) {\boldsymbol  T}_U(r,R)p\text{\tiny-}{\mathfrak m}^{\text{\tiny $t$}}(S)
\, \biggl(\frac{1}{c_p}+\frac{c_p^{\frac{{\tt d-2}}{p}-1}({\tt d}+2)}{\bigl(p-({\tt d-2})\bigr)
\bigl(p\text{\tiny-}{\mathfrak m}^{\text{\tiny $t$}}(S)\bigr)^{\frac{\tt d-2}{p}}}\biggr)
\tag{\ref{URhp+1}{\tt d}}\label{{URh}2p+1}
\end{align}
\end{subequations}
при  ${\tt d}>2$ ввиду $p\leq {\tt d}$. При $\tt d=2$ из определения \eqref{hd}  имеем  оценку сверху  
\begin{equation*}
\pi \geq c_p\overset{\eqref{hd}}{:=}\dfrac{\pi^{p/2}}{\Gamma(p/2+1)}\geq 1
\quad\text{при $p\in (0,2]$,}
\end{equation*}
что согласно \eqref{{URh}Tp+1} влечёт за собой \eqref{{Uhd}C}.

При ${\tt d}\geq 3$ и  $p\in ({\tt d}-2,{\tt d}]$  ввиду $\frac{{\tt d-2}}{p}-1<0$ получаем 
$$
c_p^{\frac{{\tt d-2}}{p}-1}= \biggl(\frac{\Gamma(p/2+1)}{\pi^{p/2}}\biggr)^{1-\frac{{\tt d-2}}{p}}
\leq \bigl(\Gamma(p/2+1)\bigr)^{1-\frac{{\tt d-2}}{p}}
\leq \bigl(\Gamma({\tt d}/2+1)\bigr)^{2/{\tt d}}\leq \frac{{\tt d}}{2},
$$
откуда для последней скобки в \eqref{{URh}2p+1}
\begin{multline*}
\biggl(\frac{1}{c_p}+\frac{c_p^{\frac{{\tt d-2}}{p}-1}({\tt d+2})}{\bigl(p-({\tt d-2})\bigr)
\bigl(p\text{\tiny-}{\mathfrak m}^{\text{\tiny $t$}}(S)\bigr)^{\frac{\tt d-2}{p}}}\biggr)
\\
\leq 
({\tt d}/2)^{{\tt d}/2}+\frac{{\tt d}({\tt d+2})/2}{\bigl(p-({\tt d-2})\bigr)
\bigl(p\text{\tiny-}{\mathfrak m}^{\text{\tiny $t$}}(S)\bigr)^{\frac{\tt d-2}{p}}}
\\
\leq 
{\tt d}^{{\tt d}}\biggl(1+\frac{1}
{\bigl(p-({\tt d-2})\bigr)
\bigl(p\text{\tiny-}{\mathfrak m}^{\text{\tiny $t$}}(S)\bigr)^{\frac{\tt d-2}{p}}}
\biggr),
\end{multline*}
что согласно \eqref{{URh}2p+1} влечёт за собой \eqref{{Uhd}d}.
\end{proof}

\subsection{Функции на комплексной плоскости и пространстве $\RR^{\tt d}$}\label{37all}

В оценках сверху в правых частях  один из первых сомножителей  $(R+r)/(R-r)$ в случае комплексной плоскости, как и явно выписываемый сомножитель  $A_{\tt d}$ из \eqref{{UR}A}
для  $\RR^{\tt d}$ с ${\tt d}>2$ позволяют в  явном виде учитывать  близость $R>r$ к $r$. Не менее важным может оказаться случай  существенной  удалённости $R$ от $r$, когда функции и меры рассматриваются на всём $\RR^{\tt d}$, а  характеристика Неванлинны достаточно медленно растёт. 
\begin{corollary}\label{corCRd}
Пусть $\mu$ ---  мера Радона  на  $\RR^{\tt d}$, $U\not\equiv \pm \infty$ ---  $\delta$-субгармон\-и\-ч\-е\-с\-к\-ая фу\-н\-к\-ция на всём  $\RR^{\tt d}$, а функция $s\colon \RR^+\to \RR^+\setminus 0$ произвольная.  Тогда 
\begin{enumerate}[{\rm I.}]
\item\label{allI} 
Если для каждого $R\in \RR^+$ 
$$
\sup_{y\in \overline B(R)}{\sf N}_y^{\mu}(r_0)<+\infty \quad\text{при некотором $r_0>0$},
$$
то функция $U$ локально суммируема по мере $\mu$   и 
\begin{multline*}
\int_{\overline B(r)} U^+\dd \mu \leq 
5{\tt d} \Bigl(1+\frac{2r}{s(r)}\Bigr)^{\tt d-1} 
\bigl(1+s(r)\bigr)^{\tt d-2}{\boldsymbol  T}_U\bigl(r,r+s(r)\bigr)\\
\times \biggl(\mu^{\rad}(r)\max\{1, r^{2-\tt d}\} +\sup_{y\in  \RR^{\tt d}}{\sf N}_y^{\mu}(r)\biggr)\quad\text{при любом $r\in \RR^+$}.
\end{multline*}
\item\label{allII} Если  $h\colon \RR^+\to \RR^+$ --- непрерывная функция с $h(0)=0$, дифференцируемая на $\RR^+\setminus 0$, и  при любом $r\in \RR^+$, т.е. при $r:=+\infty$, выполнены условия 
\eqref{sh} и \eqref{ch}, то функция $U$ локально $\mu$-суммируема и  
\begin{subequations}
\begin{align}
\int_{\overline D(r)} U^+\dd \mu \leq&  
5\biggl(1+\frac{2r}{s(r)}\biggr) {\boldsymbol  T}_U\bigl(r,r+s(r)\bigr) \, \mu^{\rad}(r)\ln\frac{e^{1+{\sf s}_h}r}{h^{-1}( \mu^{\rad}(r))}
\quad\text{на\/ $\CC$,}\notag
\\
\int_{\overline B(r)} U^+\dd \mu \leq&  
5{\tt d} \Bigl(1+\frac{2r}{s(r)}\Bigr)^{\tt d-1} 
\bigl(1+s(r)\bigr)^{\tt d-2}{\boldsymbol  T}_U\bigl(r,r+s(r)\bigr)\notag\\
&\times  \mu^{\rad}(r)\biggl(1+\frac{1+({\tt d}-2){\sf s}_h}{\bigl(h^{-1}(\mu^{\rad}(r))\bigr)^{\tt d-2}}
\biggr)
\quad\text{при  ${\tt d}>2$,}
\notag
\end{align}
\end{subequations}
где в правых частях обоих неравенств парные вхождения  $\mu^{\rad}(r)$  можно заменить 
одновременно на $h$-обхват Хаусдорфа ${\mathfrak m}_h^{\text{\tiny $\infty$}}\bigl(\overline B(r)\cap \supp \mu\bigr)$  бесконечного диаметра или на $h$-меру Хаусдорфа 
${\mathfrak m}_h^{\text{\tiny $0$}}\bigl(\overline B(r)\cap \supp \mu\bigr)$
части носителя меры Радона $\mu$, попавшей соответственно в круг $\overline D(r)\subset \CC$ или в шар $\overline B(r)\subset \RR^{\tt d}$ при   ${\tt d}>2$.

Всюду первый аргумент $r$ в ${\boldsymbol  T}_U\bigl(r,r+s(r)\bigr)$ из правых  частей приведённых неравенств можно заменить на любое число   $r'\in [0,r]$.
\end{enumerate} 
\end{corollary}

\begin{proof} Утверждение \ref{allI} --- это переписанное при $R:=r+s(r)>r$ неравенство 
\cite[теорма-критерий, (2.2T)]{Kha21N2}  для сужений $\mu\bigm|_{\overline B(r)}$ меры $\mu$ на $\overline B(r)$ с некоторыми огрублениями-упрощениями  для 
\begin{multline}\label{Adrog}
A_{\tt d}\bigl(r,r+s(r)\bigr)\overset{\eqref{{UR}A}}{:=}
5\max\bigl\{1, {\tt d}-2\bigr\}\Bigl(\frac{\bigl(r+s(r)\bigr)+r}{s(r)}\Bigr)^{\tt d-1}\max\Bigl\{1, \bigl(s(r)\bigr)^{\tt d-2}\Bigr\} 
\\
\leq 5({\tt d}-1)
 \Bigl(1+\frac{2r}{s(r)}\Bigr)^{\tt d-1} 
\max\Bigl\{1, \bigl(s(r)\bigr)^{\tt d-2}\Bigr\},
\end{multline}
где последний максимум допустимо заменить на б\'ольшую сумму  $\bigl(1+s(r)\bigr)^{\tt d-2}$.

Утверждение \ref{allII}  получается применением теоремы \ref{th2} при каждом $r\in \RR^+$ к сужениям меры Радона $\mu$ соответственно на круги $\overline D(r)$ или шары $\overline B(r)$ с  неравенствами   \eqref{URh} с учётом \eqref{Adrog}, а также использованием  
теоремы \ref{th3}\eqref{iih} в заключительной части.
\end{proof}

\subsection{Функции на круге $\DD:=D(1)$ и шаре  $\BB:=B(1)\subset \RR^{\tt d}$}\label{37allDB}
Предшествующие результаты, приведённые  во введении, не приспособлены для применения  к мероморфных функциям и к разностям субгармонических  функций на единичном круге. Всюду в этом п.~\ref{37allDB} ниже 
$\mu$ ---  мера Бореля, сосредоточенная в  $\BB\subset \RR^{\tt d}$ и конечная  на  $r\overline \BB=\overline B(r)$ при каждом $r\in [0,1)$,  $U\not\equiv \pm \infty$ --- $\delta$-субгармоническая функция  на $\BB$,  а функция
$s\colon [0,1)\to \RR^+$такова, что
\begin{equation}\label{sr1}
 0<s(r)<1-r\quad\text{при всех $r\in [0,1)$,}
\end{equation}

\begin{corollary}\label{corDB}
 Пусть ${\tt d}\geq 2$ и $\mu_r:=\mu\bigm|_{r\BB}$ --- сужение меры $\mu$ на $r\BB$.  
Если для каждого $r\in [0,1)$ 
\begin{equation}\label{Nmur1}
\sup_{y\in \supp \mu_r}
{\sf N}_y^{\mu_r}(r)<+\infty
 \quad\text{для некоторого $r_0>0$},
\end{equation}
 то при каждом $r\in [0,1)$ существует интеграл 
\begin{equation}\label{irD}
\int_{r\overline \BB} U^+\dd \mu \leq 
\frac{3^{2\tt d}}{\bigl(s(r)\bigr)^{\tt d-1}}
{\boldsymbol  T}_U\bigl(r,r+s(r)\bigr)\biggl(\frac{\mu^{\rad}(r)}{r^{\tt d-2}} +\sup_{y\in  \BB}{\sf N}_y^{\mu_r}(r)\biggr)
<+\infty.
\end{equation}
\end{corollary}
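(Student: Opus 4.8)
The plan is to specialise the master estimate of \cite{Kha21N2} to the unit ball and then fold everything into the explicit constant $3^{2\tt d}/\bigl(s(r)\bigr)^{\tt d-1}$, exactly as in the proof of the first part of Corollary~\ref{corCRd}. First I would fix $r\in[0,1)$ and put $R:=r+s(r)$, so that \eqref{sr1} forces $r<R<1$. Applying \cite[(2.2T)]{Kha21N2} to the restriction of $\mu$ to the closed ball $\overline B(r)=r\overline\BB$ then gives
\begin{equation*}
\int_{r\overline\BB} U^+\dd\mu\leq A_{\tt d}\bigl(r,r+s(r)\bigr)\,{\boldsymbol T}_U\bigl(r,r+s(r)\bigr)\Bigl(\mu^{\rad}(r)\max\{1,r^{2-\tt d}\}+\sup_{y\in\BB}{\sf N}_y^{\mu_r}(r)\Bigr).
\end{equation*}
The finiteness of the right-hand side — and hence of the integral on the left — is guaranteed by hypothesis \eqref{Nmur1} together with \cite[Lemma~2]{Kha21N2}, the very tool already used in the proof of Theorem~\ref{th1}: from the convergence of a single Newton-type integral it deduces both that the restricted charge is finite and that $\sup_{y\in\BB}{\sf N}_y^{\mu_r}(r)<+\infty$.

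The remaining work is elementary bookkeeping with the constant. Since $r<1$ and, by \eqref{sr1}, $0<s(r)<1-r<1$, we have $s(r)<1$ and $2r+s(r)<3$; hence $\max\bigl\{1,\bigl(s(r)\bigr)^{\tt d-2}\bigr\}=1$ and the definition \eqref{{UR}A} yields
\begin{equation*}
A_{\tt d}\bigl(r,r+s(r)\bigr)=5\max\{1,{\tt d}-2\}\Bigl(\frac{2r+s(r)}{s(r)}\Bigr)^{\tt d-1}<5\max\{1,{\tt d}-2\}\,\frac{3^{\tt d-1}}{\bigl(s(r)\bigr)^{\tt d-1}}\leq\frac{3^{2\tt d}}{\bigl(s(r)\bigr)^{\tt d-1}},
\end{equation*}
the last step being the elementary inequality $5\max\{1,{\tt d}-2\}\leq 3^{\tt d+1}$, which holds for every ${\tt d}\geq 2$. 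At the same time, for $r\leq 1$ and ${\tt d}\geq 2$ one has $r^{2-\tt d}\geq 1$, so $\max\{1,r^{2-\tt d}\}=r^{2-\tt d}=1/r^{\tt d-2}$ and the first summand in the last factor becomes $\mu^{\rad}(r)/r^{\tt d-2}$. Substituting these two simplifications into the displayed estimate gives precisely \eqref{irD}.

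I expect the only genuinely non-routine point to be the passage from the support supremum in \eqref{Nmur1} to the full supremum $\sup_{y\in\BB}{\sf N}_y^{\mu_r}(r)$ occurring in \eqref{irD}, together with the finiteness of the restricted charge; both are exactly the content of \cite[Lemma~2]{Kha21N2}. Everything else reduces to the direct application of the master inequality and the constant estimate above, so no further difficulty is anticipated.
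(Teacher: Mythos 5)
Your proposal is essentially the paper's own argument: restrict $\mu$ to $r\BB$, take $R:=r+s(r)<1$, apply the master inequality (2.2)/(2.2T) of \cite{Kha21N2}, and absorb $A_{\tt d}\bigl(r,r+s(r)\bigr)$ into $3^{2\tt d}/\bigl(s(r)\bigr)^{\tt d-1}$ using $s(r)<1$, $2r+s(r)<3$ and $\max\{1,r^{2-\tt d}\}=r^{2-\tt d}$; your constant bookkeeping matches \eqref{Adrog+}. The one point to correct is the attribution of the passage from the support supremum in \eqref{Nmur1} to $\sup_{y\in\BB}{\sf N}_y^{\mu_r}(r)$ and its finiteness: the paper gets this from condition V and the implication V$\Longrightarrow$II of the criterion theorem of \cite{Kha21N2} together with the remark (2.5) there, whereas Lemma~2 of \cite{Kha21N2}, as used in the proof of Theorem~\ref{th1}, only yields finiteness of the total mass of the charge, so you should cite the criterion theorem rather than that lemma for this step.
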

\begin{proof}
По условию  \eqref{Nmur1}  справедливо  \cite[теорема-критерий, ут\-в\-е\-р\-ж\-дение V]{Kha21N2} с неравенством 
\cite[(2.5)]{Kha21N2}   для сужения $\mu_r$ вместо $\mu$. Из импликации  
\cite[теорема-критерий, {V}$\Longrightarrow${II}]{Kha21N2}
 с учётом для $r<1$ неравенств 
\begin{multline}\label{Adrog+}
A_{\tt d}\bigl(r,r+s(r)\bigr)\overset{\eqref{Adrog}}{\leq} 5{(\tt d}-1)
 \Bigl(1+\frac{2r}{s(r)}\Bigr)^{\tt d-1} \max\Bigl\{1, \bigl(s(r)\bigr)^{\tt d-2}\Bigr\}\\
\overset{\eqref{sr1}}{\leq} \frac{5({\tt d}-1)3^{\tt d-1}}{\bigl(s(r)\bigr)^{\tt d-1}}
\leq \frac{1}{\bigl(s(r)\bigr)^{\tt d-1}}\begin{cases}
15\text{ при ${\tt d=2}$}\\
3^{2{\tt d}}\text{ при ${\tt d}\geq 2$}
\end{cases}\quad \text{при всех $r\in [0,1)$}
\end{multline}
по неравенству \cite[(2.2)]{Kha21N2} с $\mu_r$ вместо $\mu$ и $r+s(r)$ в роли $R$ получаем   \eqref{irD}.
\end{proof}

\begin{corollary}\label{corDBh}
  Если  $h\colon [0,1]\to \RR^+$ --- непрерывная функция с $h(0)=0$, дифференцируемая на $(0,1)$, и  при $r:=1$ выполнены \eqref{sh} и \eqref{ch}, то функция $U$  $\mu$-суммируема на каждом соответственно круге $r\overline \DD$ или шаре $r\overline \BB$ и  
\begin{align}
\int_{r\overline \DD} U^+\dd \mu &\leq  
\frac{15}{s(r)} {\boldsymbol  T}_U\bigl(r,r+s(r)\bigr) \, \mu^{\rad}(r)\ln\frac{e^{1+{\sf s}_h}r}{h^{-1}( \mu^{\rad}(r))}
\quad\text{на\/ $\DD$,}\notag
\\
\int_{r\overline \BB} U^+\dd \mu &\leq  
\frac{3^{2\tt d}}{\bigl(s(r)\bigr)^{\tt d-1}} 
{\boldsymbol  T}_U\bigl(r,r+s(r)\bigr)
  \mu^{\rad}(r)\biggl(1+\frac{1+({\tt d}-2){\sf s}_h}{\bigl(h^{-1}(\mu^{\rad}(r))\bigr)^{\tt d-2}}
\biggr)\text{ при\/  ${\tt d}>2$}
\notag
\end{align}
на\/ $\BB$, где справа в  обоих неравенств парные вхождения\/  $\mu^{\rad}(r)$  можно заменить одновременно на $h$-обхват Хаусдорфа\/ ${\mathfrak m}_h^{\text{\tiny $\infty$}}\bigl(\overline B(r)\cap \supp \mu\bigr)$  бесконечного диаметра или на $h$-меру Хаусдорфа ${\mathfrak m}_h^{\text{\tiny $0$}}\bigl(\overline B(r)\cap \supp \mu\bigr)$, а
 первый аргумент $r$ в ${\boldsymbol  T}_U\bigl(r,r+s(r)\bigr)$ справа  можно заменить на любое число   $r'\in [0,r]$.
\end{corollary}
\begin{proof} Получается сочетанием теорем \ref{th2} и \ref{th3}\eqref{iih} из \eqref{URh},
применяемого    при  $r\in (0,1)$ к сужениям  $\mu\bigm|_{r\BB}$  вместо $\mu$ и $r+s(r)$ в роли $R$, 
 и \eqref{Adrog+}.
\end{proof}
\subsection{Случай  ${\tt d}$-мерной пространственной меры Лебега в $\RR^{\tt d}$}\label{p7_2}

Рассмотрим в теореме \ref{th5}  в качестве меры Бореля  $\mu$ сужения соответственно плоской меры Лебега $\uplambda_{\CC}$ на $\uplambda_{\CC}$-измеримое $E\subset \overline D(r)$ или ${\tt d}$-мерной пространственной мерой Лебега $\uplambda_{\RR^{\tt d}}$ на $\uplambda_{\RR^{\tt d}}$-измеримое  $E\subset \overline B(r)$. Тогда в крайнем случае  теоремы \ref{th5}  при выборе $p:={\tt d}$, $t:=0$ и  $b$ в \eqref{{chrh}hd}, равным  соответственно площади $\pi$ единичного круга $D(1)\subset \CC$ при ${\tt d}=2$ или  объёму единичного шара  $B(1)\subset \RR^{\tt d}$ 
\begin{equation}\label{volB}
\frac{\pi^{{\tt d}/2}}{\Gamma({\tt d}/2+1)}\leq \frac{\pi^{{\tt 5}/2}}{\Gamma({\tt 5}/2+1)}=\frac{8}{15}\pi^2< 6
\quad\text{при всех ${\tt d}>2$,}
\end{equation} 
согласно совпадению меры Лебега $\uplambda_{\RR^{\tt d}}$ и  ${\tt d}$-мерной меры Хаусдорфа ${\tt d}\text{\tiny-}{\mathfrak m}^{\text{\tiny $0$}}$ в $\RR^{\tt d}$, отмеченному в  примере \ref{ex:1}, сразу получаем  
\begin{corollary}\label{cor2} При  $0<r\in \RR^+$ и $\uplambda_{\RR^{\tt d}}$-измеримом $E\subset \overline B(r)$ 
для каждой 
 $\delta$-суб\-г\-а\-р\-м\-о\-н\-и\-ч\-е\-с\-к\-ой    функции  $U\not\equiv \pm \infty$ на шаре  $\overline B(R)$
 радиуса $R>r$  
\begin{subequations}\label{Uhdd}
\begin{align}
\int_E U^+\dd \uplambda_{\CC} &
\leq 8 \frac{R+r}{R-r} {\boldsymbol  T}_U(r,R)\, \uplambda_{\CC}(E)
\ln\frac{\pi e^{3}r^2}{\uplambda_{\CC}(E)}\quad\text{на\/ $\CC$ при ${\tt d=2}$},
\tag{\ref{Uhdd}$\CC$}\label{{Uhdd}C}\\
\int_E U^+\dd \uplambda_{\RR^{\tt d}} &\leq 6{\tt d}^{\tt d}
A_{\tt d}(r,R) {\boldsymbol  T}_U(r,R)\Bigl(\uplambda_{\RR^{\tt d}}(E)+\bigl(\uplambda_{\RR^{\tt d}}(E)\bigr)^{2/{\tt d}}\Bigr)
\quad\text{при  ${\tt d}>2$,}
\tag{\ref{Uhdd}{\tt d}}\label{{Uhdd}d}
\end{align}
\end{subequations}  
где  $r$ в ${\boldsymbol   T}_U(r,R)$ 
можно заменить на любое число $r'\in [0,r]$. 
\end{corollary}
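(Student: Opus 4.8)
The plan is to specialize Theorem~\ref{th5} to the Radon measure obtained by restricting Lebesgue measure to $E$. Put $\mu:=\uplambda_{\RR^{\tt d}}\bigm|_{E}$ (for ${\tt d}=2$ this is $\uplambda_{\CC}\bigm|_{E}$), so that $\int_{E}U^+\dd\uplambda_{\RR^{\tt d}}=\int U^+\dd\mu$ and $\supp\mu\subset\overline B(r)$. Every closed ball $\overline B_y(x)$ has volume $c_{\tt d}x^{\tt d}$ with $c_{\tt d}$ from \eqref{hd}, and intersecting it with $E$ can only decrease the mass, so the shift function \eqref{hmuR} satisfies
\[
{\sf h}_\mu(x)=\sup_{y\in\RR^{\tt d}}\uplambda_{\RR^{\tt d}}\bigl(\overline B_y(x)\cap E\bigr)\leq c_{\tt d}\,x^{\tt d}\qquad(x\in[0,r]).
\]
Hence hypothesis \eqref{{chrh}hd} of Theorem~\ref{th5} holds with $p:={\tt d}$ and $b:=c_{\tt d}$, and since $p-({\tt d}-2)=2>0$ we have $p\in({\tt d}-2,{\tt d}]$ as required there.

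Next I would read off the conclusions \eqref{{Uhd}C} and \eqref{{Uhd}d} at covering order $t:=0$. This value is admissible because the estimates \eqref{Uhd} are produced through Theorem~\ref{th3}\eqref{iih}, which furnishes them for \emph{every} $t\in\overline\RR^+$; equivalently, one may take any $t\geq r$, note $p\text{\tiny-}{\mathfrak m}^{\text{\tiny $0$}}(E)\geq p\text{\tiny-}{\mathfrak m}^{\text{\tiny $t$}}(E)$ by \eqref{hH}, and invoke the monotonicity of the right-hand sides in the mass argument (Lemma~\ref{lem2}). By Example~\ref{ex:1} the ${\tt d}$-dimensional content coincides with Lebesgue measure,
\[
{\tt d}\text{\tiny-}{\mathfrak m}^{\text{\tiny $0$}}(E)=\uplambda_{\RR^{\tt d}}(E),
\]
so throughout \eqref{{Uhd}C} and \eqref{{Uhd}d} the quantity $p\text{\tiny-}{\mathfrak m}^{\text{\tiny $t$}}(S)$ may be replaced by $\uplambda_{\RR^{\tt d}}(E)$.

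It remains to tidy the numerical constants. For ${\tt d}=2$ the prefactor in \eqref{{Uhd}C}, which the proof of Theorem~\ref{th5} evaluates as $\tfrac{5b}{p}=\tfrac{5\pi}{2}$, is $<8$, while its logarithmic numerator is $\pi e^{p+1}r^{p}=\pi e^{3}r^{2}$; as $\uplambda_{\CC}(E)\leq\pi r^{2}<\pi e^{3}r^{2}$ keeps the logarithm positive, this is exactly \eqref{{Uhdd}C}. For ${\tt d}>2$, setting $p={\tt d}$ gives $p-({\tt d}-2)=2$ and $({\tt d}-2)/p=({\tt d}-2)/{\tt d}$, so the bracket in \eqref{{Uhd}d} becomes $\uplambda_{\RR^{\tt d}}(E)+\tfrac12\bigl(\uplambda_{\RR^{\tt d}}(E)\bigr)^{2/{\tt d}}$; bounding $\tfrac12\leq1$ and $b=c_{\tt d}<6$ through \eqref{volB} yields the prefactor $6{\tt d}^{\tt d}A_{\tt d}(r,R)$ multiplying $\uplambda_{\RR^{\tt d}}(E)+\bigl(\uplambda_{\RR^{\tt d}}(E)\bigr)^{2/{\tt d}}$, i.e.\ \eqref{{Uhdd}d}.

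I expect no essential obstacle: the corollary is a substitution into Theorem~\ref{th5}. The only points needing care are the admissibility of $t=0$ (covered by the version of Theorem~\ref{th5} valid for every $t\in\overline\RR^+$ via Theorem~\ref{th3}\eqref{iih}) and the passage from the sharp constants $\tfrac{5\pi}{2}$ and $c_{\tt d}$ to the round values $8$ and $6$ supplied by \eqref{volB}. A minor technical nuisance is the support condition $\supp\mu\subset S\subset\overline B(r)$: one simply takes $S:=E$, or, should $E$ fail to be closed, a measurable representative with $\supp\mu\subset S\subset\overline B(r)$ and $\uplambda_{\RR^{\tt d}}(S)=\uplambda_{\RR^{\tt d}}(E)$, which leaves every estimate above unchanged.
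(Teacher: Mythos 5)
Your proposal is correct and follows essentially the same route as the paper, which likewise derives Corollary~\ref{cor2} by substituting $\mu:=\uplambda_{\RR^{\tt d}}\bigm|_E$, $p:={\tt d}$, $t:=0$ and $b:=c_{\tt d}$ (with $c_2=\pi$ and $c_{\tt d}<6$ by \eqref{volB}) into Theorem~\ref{th5} and identifying $\uplambda_{\RR^{\tt d}}$ with ${\tt d}\text{\tiny-}{\mathfrak m}^{\text{\tiny $0$}}$ as in Example~\ref{ex:1}. The only superfluous step is your fallback for non-closed $E$: one simply takes $S:=E$, since Theorem~\ref{th3}\eqref{iih} (through which Theorem~\ref{th5} is proved) only needs $\mu$ to be concentrated on $S$, and a representative with $\supp\mu\subset S$ and $\uplambda_{\RR^{\tt d}}(S)=\uplambda_{\RR^{\tt d}}(E)$ need not even exist.
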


Следующее утверждение --- это пересечение  следствий \ref{cor2} и  \ref{corCRd} с учётом \eqref{Adrog}.   

\begin{corollary}\label{corall2} 
Пусть $U\not\equiv \pm \infty$ ---  $\delta$-субгармоническая функция на всём  $\RR^{\tt d}$, а функция $s\colon \RR^+\to \RR^+\setminus 0$ произвольная. Тогда для любого  $\uplambda_{\RR^{\tt d}}$-измеримого 
подмножества $E\subset \RR^{\tt d}$  при любом $r\in \RR^+$ 
\begin{align*}
\int_{E\cap\overline D(r)} U^+\dd \uplambda_{\CC} &
\leq 8 
\Bigl(1+\frac{2r}{s(r)}\Bigr) {\boldsymbol  T}_U\bigl(r,r+s(r)\bigr)\, \uplambda_{\CC}\bigl(E\cap\overline D(r)\bigr)
\ln\frac{\pi e^{3}r^2}{\uplambda_{\CC}\bigl(E\cap\overline D(r)\bigr)}
\\
\intertext{при ${\tt d=2}$, т.е. на $\CC$, а при ${\tt d}>2$}
\int_{E\cap \overline B(r)} U^+\dd \uplambda_{\RR^{\tt d}} &\leq 30{\tt d}^{\tt d+1}
\Bigl(1+\frac{2r}{s(r)}\Bigr)^{\tt d-1} 
\bigl(1+s(r)\bigr)^{\tt d-2}
{\boldsymbol  T}_U\bigl(r,r+s(r)\bigr)\\
&\times \Bigl(\uplambda_{\RR^{\tt d}}\bigl(E\cap \overline B(r)\bigr)+\bigl(\uplambda_{\RR^{\tt d}}\bigl(E\cap \overline B(r)\bigr)\bigr)^{2/{\tt d}}\Bigr),
\end{align*}
где  первый аргумент $r$ в ${\boldsymbol   T}_U\bigl(r,r+s(r)\bigr)$ 
можно заменить на любое $r'\in [0,r]$. 
\end{corollary}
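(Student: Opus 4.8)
The plan is to obtain Corollary~\ref{corall2} as a direct specialization of Corollary~\ref{cor2}, in which the auxiliary radius $R>r$ is eliminated by the choice $R:=r+s(r)$ together with the bound~\eqref{Adrog} on $A_{\tt d}$, exactly as was done in the proof of Corollary~\ref{corCRd}. Since $s(r)\in\RR^+\setminus 0$, we have $R=r+s(r)>r$, so Corollary~\ref{cor2} applies on $\overline B(R)$; and because $U$ is given on all of $\RR^{\tt d}$, its restriction to $\overline B(R)$ is a $\delta$-subharmonic function $U\not\equiv\pm\infty$ to which that corollary refers.

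First, to reduce an arbitrary $\uplambda_{\RR^{\tt d}}$-measurable set $E\subset\RR^{\tt d}$ to the situation of Corollary~\ref{cor2}, where the set must lie in $\overline B(r)$, I would replace $E$ by $E\cap\overline B(r)$ (resp.\ by $E\cap\overline D(r)$ for $\CC$), which is again $\uplambda_{\RR^{\tt d}}$-measurable and contained in $\overline B(r)$. For ${\tt d}=2$ this yields the claim at once: setting $R=r+s(r)$ in the complex part of~\eqref{Uhdd} and using
\[
\frac{R+r}{R-r}=\frac{2r+s(r)}{s(r)}=1+\frac{2r}{s(r)}
\]
turns the factor $8\,\frac{R+r}{R-r}$ into $8\bigl(1+\frac{2r}{s(r)}\bigr)$, reproducing the first displayed inequality verbatim.

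For ${\tt d}>2$ I would take the real part of~\eqref{Uhdd} with $R=r+s(r)$ and insert the estimate~\eqref{Adrog}, namely
\[
A_{\tt d}\bigl(r,r+s(r)\bigr)\leq 5({\tt d}-1)\Bigl(1+\frac{2r}{s(r)}\Bigr)^{\tt d-1}\bigl(1+s(r)\bigr)^{\tt d-2},
\]
whose last factor arises by bounding $\max\{1,(s(r))^{\tt d-2}\}$ above by $(1+s(r))^{\tt d-2}$, as in the proof of Corollary~\ref{corCRd}. The leading constant then becomes $6{\tt d}^{\tt d}\cdot 5({\tt d}-1)=30\,{\tt d}^{\tt d}({\tt d}-1)$, which I bound by $30\,{\tt d}^{\tt d+1}$ via ${\tt d}-1\leq{\tt d}$; collecting the remaining factors $\bigl(1+\frac{2r}{s(r)}\bigr)^{\tt d-1}(1+s(r))^{\tt d-2}$ and ${\boldsymbol T}_U\bigl(r,r+s(r)\bigr)$, while carrying over the volume term $\uplambda_{\RR^{\tt d}}(E\cap\overline B(r))+\bigl(\uplambda_{\RR^{\tt d}}(E\cap\overline B(r))\bigr)^{2/{\tt d}}$ unchanged from~\eqref{Uhdd}, reproduces the second displayed inequality.

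The argument poses no genuine obstacle: it is pure bookkeeping of constants around the substitution $R=r+s(r)$. The only points demanding care are the two elementary estimates $\max\{1,(s(r))^{\tt d-2}\}\leq(1+s(r))^{\tt d-2}$ and ${\tt d}-1\leq{\tt d}$, whose correct use fixes the explicit constants $8$ and $30\,{\tt d}^{\tt d+1}$. Finally, the admissibility of replacing $r$ and ${\boldsymbol T}_U\bigl(r,r+s(r)\bigr)$ by an arbitrary $r'\in[0,r]$ on the right-hand sides is inherited directly from the corresponding freedom already recorded in Corollaries~\ref{cor2} and~\ref{corCRd}.
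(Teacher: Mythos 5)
Your proposal is correct and coincides with the paper's own (one-line) justification: the paper likewise obtains Corollary~\ref{corall2} by combining Corollary~\ref{cor2} with the device of Corollary~\ref{corCRd}, i.e.\ substituting $R:=r+s(r)$ and invoking the bound \eqref{Adrog} on $A_{\tt d}$, and your bookkeeping of the constants ($8(1+\tfrac{2r}{s(r)})$ for ${\tt d}=2$ and $6{\tt d}^{\tt d}\cdot 5({\tt d}-1)\leq 30\,{\tt d}^{\tt d+1}$ together with $\max\{1,(s(r))^{\tt d-2}\}\leq(1+s(r))^{\tt d-2}$ for ${\tt d}>2$) reproduces the stated inequalities exactly.
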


\subsection{Случай $({\tt d-1})$-мерной поверхностной меры  в $\RR^{\tt d}$}\label{p7_3}
Здесь лишь в некоторой мере  затрагивается  интегрирование разностей субгармонических функций в $\CC$ по мере длины кривых в $\CC$ или по поверхностным мерам на гиперповерхностях в $\RR^{\tt d}$, поскольку возможны и более  общие следствия для интегрирования $\delta$-субгармонических функций по римановым поверхностям \cite{Chi18} или  по многообразиям фрактальной размерности.  В использовании известных сведений о липшицевых функциях и их взаимосвязях со спрямляемостью и мерами Хаусдорфа   следуем, в основном,   \cite[\S~3.2]{Federer}, \cite{FedererI_II},  \cite[3.3]{EG}, \cite[3.7]{Morgan}, опуская конкретные ссылки. 
\subsubsection{Случай кривой в $\CC$}\label{731} 
Пусть  $O\neq \varnothing $ --- {\it открытое множество\/}  на $\RR$  и   $l\colon O\to 
\CC$ --- {\it липшицева инъективная  функция\/} с {\it постоянной Липшица} 
\begin{equation}\label{Lip}
{\sf Lip}(l):=\sup_{\stackrel{x_1\neq x_2}{x_1,x_2\in O}}\frac{\bigl|l(x_2)-l(x_1)\bigr|}{|x_2-x_1|}\in \RR^+,
\end{equation}
 которую  в этом п.~\ref{731} называем также и {\it липшицевой кривой} (без самопересечений и без концов).  По теореме Радемахера такая функция дифференцируема почти всюду по линейной мере Лебега  $\uplambda_{\RR}$, а для модуля её производной   определена  {\it существенная верхняя грань\/} 
\begin{equation}\label{|s'|}
\|l'\|_{\infty}:=\inf \Bigl\{a\in \RR\Bigm| \uplambda_{\RR}\Bigl(\bigl\{x\in E\bigm|
\bigl|l'(x)\bigr|>a\bigr\}\Bigr)=0 \Bigr\}\leq  \sqrt{2}\,{\sf Lip}(l) \in \RR^+.
\end{equation}
Для борелевского подмножества  $E\subset l(O)$ его {\it длина\/} 
\begin{equation}\label{sigma(E)}
\sigma(E):=\int_{l^{-1}(E)}|l'|\dd \uplambda_{\RR}
\end{equation}
совпадает с  одномерной мерой  Хаусдорфа $1\text{\tiny-}{\mathfrak m}^{\text{\tiny $0$}}(E)$ множества $E$. 
В частности, {\it мера длины\/ $\sigma$ на\/}  $l(O)$ --- регулярная мера Бореля на $\CC$.

Липшицеву кривую $l\colon O\to \CC$ называем {\it билипшицевой,\/}  если 
\begin{equation}\label{bs}
{\sf Lip}(l^{-1}):=
\sup_{\stackrel{z_1\neq z_2}{z_1,z_2\in l(O)}}\frac{\bigl| l^{-1}(z_2)-l^{-1}(z_1)\bigr|}{|z_2-z_1|}=
\sup_{\stackrel{x_1\neq x_2}{x_1,x_2\in O}}\frac{|x_2-x_1|}{\bigl|l(x_2)-l(x_1)\bigr|}\in \RR^+.
\end{equation}

\begin{corollary}\label{cor4} Пусть $l\colon O\to \CC$ ---  билипшицева  кривая и $l(O)\subset \overline D(r)$ для некоторого $r\in \RR^+$.  Тогда  каждая $\delta$-субгармоническая функция $U\not\equiv \pm\infty$ на круге $\overline D(R)$ радиуса $R>r$ суммируема    по  мере длины $\sigma$ на $l(O)$  и для любого борелевского подмножества $E\subset l(O)$ 
\begin{equation}\label{EUs}
\int_{E} U^+\dd \sigma \leq  15\,{\sf Lip}(l)\,{\sf Lip}(l^{-1})\,\frac{R+r}{R-r} {\boldsymbol  T}_U(r,R)\, 
\sigma(E) \ln\frac{\pi e^{2}r}{\sigma(E)}<+\infty.
\end{equation}
\end{corollary}
\begin{proof} 
Согласно \eqref{D}  длина  пересечения круга   $\overline D_z(t)$  с  борелевским множеством  $E\subset l(O)$ при произвольном $z\in \CC$  равна 
\begin{equation*}
\int_{l^{-1}(E\cap \overline D_z(t) )}|l'|\dd \uplambda_{\RR}
\overset{\eqref{|s'|}}{\leq}
\sqrt{2}\,{\sf Lip}(l)
 \uplambda_{\RR}\Bigl(l^{-1}\bigl(\overline D_z(t) \bigr)\Bigr),
\end{equation*}
где ввиду  ${\sf Lip}(l^{-1})\overset{\eqref{bs}}{<}+\infty$  диаметр множества 
$l^{-1}\bigl(\overline D_z(t) \bigr)=l^{-1}\bigl(l(O)\cap \overline D_z(t) \bigr)$
не больше ${\sf Lip}(l^{-1}) \cdot 2t$. 
Отсюда  для любых $z\in \CC$ и $t\in \RR^+$ определённая в \eqref{sigma(E)}  длина  $\sigma \bigl(l(O)\cap \overline D_z(t)\bigr)$ 
попавшей в $\overline D_z(t)$ части $l(O)\cap \overline D_z(t)$   не превышает 
$\sqrt{2}\,{\sf Lip}(l) \,{\sf Lip}(l^{-1}) \cdot 2t$.  Таким образом, для модуля непрерывности ${\sf h}_{\sigma_E}$ сужения 
$\sigma_E:=\sigma\bigm|_E$ меры $\sigma$ на $E$      
имеет место неравенство 
$$
{\sf h}_{\sigma_E}(t)\leq {\sf h}_{\sigma}(t)\leq 2\sqrt{2}\,{\sf Lip}(l) \,{\sf Lip}(l^{-1})\, t,
$$
что означает выполнение условия \eqref{{chrh}hd} теоремы \ref{th5} с 
$$p:=1, \quad b:=2\sqrt{2}\,{\sf Lip}(l) \,{\sf Lip}(l^{-1})
$$ 
для меры  Бореля $\mu:=\sigma_E$.
По теореме \ref{th5} требуемое неравенство \eqref{EUs} --- это расписанное \eqref{{Uhd}C}
с учётом равенства $1\text{\tiny-}{\mathfrak m}^{\text{\tiny $0$}}(E)=\sigma (E)=\sigma_E(E)$ для  $E\subset l(O)$.
\end{proof}
\begin{remark} При  дифференцируемости  $l$ на $O$ по теореме Лагранжа о конечных приращениях  
имеем довольно грубую  оценку 
\begin{equation}\label{l1est}
{\sf Lip}(l^{-1})\overset{\eqref{bs}}{\leq} \sup_{x_1,x_2\in O}\frac{1}{\sqrt{\bigl(\Re l'(x_1)\bigr)^2+\bigl(\Im  l'(x_2)\bigr)^2}}.
\end{equation}
\end{remark}

\begin{remark}
Можно рассмотреть и  кривые $l$ с  самопересечениями,  снимая условие инъективности $l$ и  используя  так называемую  функцию кратности для кривой и формулу площади 
\cite[\S~3.2]{Federer}, \cite[3.3]{EG}, \cite[3.7]{Morgan}.
\end{remark}

Рассмотрим также липшицевы кривые со специальной параметризацией.     

Пусть, по-прежнему, $O\in \RR$ --- открытое множество  на $\RR$,  
$y\colon  O\to \RR$ --- {\it липшицева функция\/}  с постоянной Липшица ${\sf Lip}(y)\in \RR^+$. 
Соответствующую липшицеву кривую (без самопересечений и без концов)
\begin{equation}\label{ly}
l_y\colon x\underset{x\in O}{\longmapsto}x+iy(x)\in \CC
\end{equation}  
в $\CC$ с очевидной  постоянной Липшица 
\begin{equation}\label{Lly}
{\sf Lip}(l_y)=\sqrt{1+\bigl({\sf Lip}(y)\bigr)^2}\in \RR^+
\end{equation}
называют {\it  кривой ограниченного наклона $q:={\sf Lip}(y)$ в $\CC$,\/} 
часто несколько некорректно рассматривая её как образ $l_y(O)\subset \CC$ или 
 график $\bigl\{x+iy(x)\bigm| x\in O\bigr\}\subset \CC$. Липшицева кривая вида \eqref{ly} автоматически билипшицева, поскольку
\begin{equation}\label{LipL}
{\sf Lip}(l_y^{-1})\overset{\eqref{bs}}{=}
\sup_{\stackrel{x_1\neq x_2}{x_1,x_2\in O}}\frac{|x_2-x_1|}{\sqrt{(x_2-x_1)^2+\bigl(y(x_2)-y(x_1)\bigr)^2}}
\leq 1.
\end{equation}

\begin{corollary}\label{corarc} Пусть $l_y$ из \eqref{ly}  ---  кривая  ограниченного наклона $q\in \RR^+$ в $\CC$ с мерой длины $\sigma$, а $s\colon \RR^+\to  \RR^+\setminus 0$ ---  произвольная функция. 
Тогда  каждая $\delta$-субгармоническая функция $U\not\equiv \pm\infty$ на  $\CC$
 локально суммируема  по  мере длины $\sigma$ на $l_y(O)$  и для любого борелевского $E\subset l_y(O)$ 
при любом $r\in \RR^+$
\begin{equation*}
\int_{E\cap  D(r)} U^+\dd \sigma \leq 
  15\sqrt{1+q^2}\Bigl(1+\frac{2r}{s(r)}\Bigr) {\boldsymbol  T}_U\bigl(r,r+s(r)\bigr)\, 
\sigma(E\cap \overline D(r)) \ln\frac{\pi e^{2}r}{\sigma(E\cap \overline D(r))}.
\end{equation*}
\end{corollary}
\begin{proof} Рассмотрим открытое множество $O_r :=l_y^{-1}\bigl(l(O)\cap D(r)\bigr)$ и 
билипшицеву кривую $l_y^r\colon O_r\to D(r)\subset \overline D(r)$, равную сужению билипшицевой кривой 
 $l_y$ на $O_r$.  По предыдущему следствию \ref{cor4}, применённому к билипшицевой кривой
$l_y^r$, из неравенства \eqref{EUs} с $R:=r+s(r)$, учитывая  \eqref{Lly} и \eqref{LipL}, 
получаем требуемое в следствии \ref{corarc} неравенство. 

\end{proof}
\subsubsection{Случай гиперповерхности  в\/ $\RR^{\tt d}$}\label{732}
Пусть $O\neq \varnothing$ ---   {\it открытое\/}  подмножество в $\RR^{\tt d-1}$ и 
$s\colon O\to \RR^{\tt d}$ --- {\it липшицева инъективная функция\/} с конечной постоянной Липшица 
${\sf Lip}(l)$, определенной как в \eqref{Lip}.
Такую функций в этом п.~\ref{732} называем также и {\it липшицевой гиперповерхностью в $\RR^{\tt d}$} (без самопересечений и без края).   По теореме Радемахера  для почти всех  точек  $x\in O$ по мере Лебега  $\uplambda_{\RR^{\tt d-1}}$ 
 определён {\it модуль   якобиана\/} $ |Jl|(x)$, равный  арифметическому квадратному корню из суммы квадратов ${\tt d}$ всех миноров  порядка ${\tt d}-1$ матрицы Якоби в точке $x$.  
В частности, почти всюду на $O$ по мере $\uplambda_{\RR^{\tt d-1}}$  корректно определена {\it существенная верхняя грань\/} модуля якобиана
\begin{equation}\label{JlL}
\|Jl\|_{\infty}\leq \sqrt{({\tt d}-1)!\,{\tt d}}\bigl({\sf Lip}(l)\bigr)^{\tt d-1}\in \RR^+.
\end{equation}
Для борелевского подмножества    $E\subset l(O)$ его  {\it $({\tt d-1})$-мерная  площадь}  
\begin{equation}\label{D}
\sigma(E):=\int_{l^{-1}(E)} |Jl|\dd \uplambda_{\RR^{\tt d-1}}
\end{equation}
совпадает с $({\tt d}-1)$-мерной мерой Хаусдорфа $({\tt d}-1)\text{\tiny-}{\mathfrak m}^{\text{\tiny $0$}}(E)$ множества $E$. В частности, {\it мера площади  $\sigma$ на $l(O)$\/} --- регулярная мера Бореля на $\RR^{\tt d}$.

Липшицеву гиперповерхность  называем {\it билипшицевой,\/}  если конечна 
величина ${\sf Lip}(l^{-1})$, определённая в \eqref{bs}.

\begin{corollary}\label{cor5} Пусть $l 
\colon O\to \RR^{\tt d}$ --- билипшицева гиперповерхность в $\RR^{\tt d}$ 
и $l(O)\subset \overline D(r)$ для некоторого\/ $r\in \RR^+$.  Тогда
каждая $\delta$-субгармоническая функция $U\not\equiv\pm\infty$
на шаре $\overline B(R)\subset \RR^{\tt d}$ радиуса $R>r$ суммируема  по  мере площади $\sigma$ на $l(O)$  и для любого борелевского $E\subset l(O)$ выполнено неравенство
\begin{equation}\label{EUs+}
\int_{E} U^+\dd \sigma \leq  
3{\tt d}^{2\tt d} \bigl({\sf Lip}(l)\,{\sf Lip}(l^{-1})\bigr)^{\tt d-1}
A_{\tt d}(R,r)  {\boldsymbol  T}_U(r,R) 
\Bigl( \sigma(E)+\bigl(\sigma(E)\bigr)^{\frac{1}{\tt d-1}}\Bigr).
\end{equation}
\end{corollary}

\begin{proof} Площадь пересечения  $\overline B_x(t)$  с  бо\-р\-е\-л\-евским множеством  $E\subset l(O)$ при произвольном $x\in \RR^{\tt d}$     согласно  \eqref{D} 
равна 
\begin{equation}\label{AS}
\int_{l^{-1}(E\cap \overline B_x(t) )}|Jl|\dd \uplambda_{\RR^{\tt d-1}}
\overset{\eqref{JlL}}{\leq}
\sqrt{({\tt d}-1)!\,{\tt d}}\bigl({\sf Lip}(l)\bigr)^{\tt d-1}
 \uplambda_{\RR^{\tt d-1}}\Bigl(l^{-1}\bigl(\overline B_x(t) \bigr)\Bigr),
\end{equation}
где ввиду  ${\sf Lip}(l^{-1})<+\infty$  диаметр множества 
$l^{-1}\bigl(\overline B_x(t) \bigr)=l^{-1}\bigl(l(O)\cap \overline B_x(t) \bigr)$
не превышает ${\sf Lip}(l^{-1}) \cdot 2t$. Следовательно, множество  $l^{-1}(\overline B_x(t) \bigr)\subset \RR^{\tt d-1}$
содержится в некотором шаре радиуса $2\,{\sf Lip}(l^{-1})\,t$ из $\RR^{\tt d-1}$ и 
$$
 \uplambda_{\RR^{\tt d-1}}\Bigl(l^{-1}\bigl(\overline B_x(t) \bigr)\Bigr)
\leq \frac{\pi^{{\tt d}/2}}{\Gamma({\tt d}/2+1)}\bigl(2\,{\sf Lip}(l^{-1})t\bigr)^{\tt d-1}
\overset{\eqref{volB}}{\leq}6\cdot 2^{\tt d-1}\bigl({\sf Lip}(l^{-1})\bigr)^{\tt d-1}t^{\tt d-1}
$$
при $\tt d>2$ для любых $x\in \RR^{\tt d}$ и $t\in \RR^+$. Согласно \eqref{AS} это означает, что 
для модуля непрерывности  ${\sf h}_{\sigma_E}$ сужения $\sigma_E:=\sigma\bigm|_E$ меры $\sigma$ на $E\subset l(O)$   
\begin{multline*}
{\sf h}_{\sigma_E}(t)\leq {\sf h}_{\sigma}(t)\leq 
6\cdot 2^{\tt d-1}\bigl({\sf Lip}(l^{-1})\bigr)^{\tt d-1}t^{\tt d-1} \,\sqrt{({\tt d}-1)!\,{\tt d}}\bigl({\sf Lip}(l)\bigr)^{\tt d-1}
\\
\leq 3{\tt d}^{\tt d} \bigl({\sf Lip}(l)\,{\sf Lip}(l^{-1})\bigr)^{\tt d-1}
t^{\tt d-1} \quad\text{при всех $t\in \RR^+$ для ${\tt d}>2$.}
\end{multline*}
Отсюда для меры $\mu:=\sigma_E$ выполнено условие   
\eqref{{chrh}hd} теоремы \ref{th5} с 
$$
p:={\tt d-1}, \quad b:=3{\tt d}^{\tt d} \bigl({\sf Lip}(l)\,{\sf Lip}(l^{-1})\bigr)^{\tt d-1}.
$$
По теореме \ref{th5} каждая    $\delta$-суб\-г\-а\-р\-м\-о\-н\-и\-ч\-е\-с\-к\-ая   функция  $U\not\equiv \pm \infty$ на шаре  $\overline B(R)$  радиуса $R>r$  $\sigma_E$-суммируема для любого борелевского $E\subset l(O)$   и по неравенству \eqref{{Uhd}d} с $\mu:=\sigma_E$ получаем
\begin{multline*}
\int_{E} U^+\dd \sigma=\int_{\overline B(r)} U^+\dd \sigma_E \leq 3{\tt d}^{2\tt d} \bigl({\sf Lip}(l)\,{\sf Lip}(l^{-1})\bigr)^{\tt d-1}A_{\tt d}(r,R) {\boldsymbol  T}_U(r,R)\\
\times (\tt d-1)\text{\tiny-}{\mathfrak m}^{\text{\tiny $0$}}(E) \Biggl(1+\frac{1}{\bigl((\tt d-1)\text{\tiny-}{\mathfrak m}^{\text{\tiny $0$}}(E)\bigr)^{\frac{\tt d-2}{\tt d-1}}}\Biggr)\quad\text{для   ${\tt d}>2$.}
\end{multline*}  
Но, как отмечено после \eqref{D}, $\sigma(E)=({\tt d}-1)\text{\tiny-}{\mathfrak m}^{\text{\tiny $0$}}(E)$ и такая замена в правой части последнего неравенства даёт требуемое \eqref{EUs+}.
\end{proof}

\end{fulltext}

\end{document}